\newlength{\dhatheight}
\newcommand{\ve}{\varepsilon}
\newcommand{\wt}{\widetilde}
\newcommand{\wh}{\widehat}
\newcommand{\wb}{\overline}
\newcommand{\f}{\frac}
\newcommand{\p}{\partial}
\newcommand{\bs}{\boldsymbol}
\newcommand{\z}{\boldsymbol{z}}
\newcommand{\bz}{\bar{\boldsymbol{z}}}
\newcommand{\be}{\begin{equation}}
\newcommand{\eb}{\end{equation}}
\newcommand{\revii}[1]{{\color{black}#1}}
\newcommand{\reviii}[1]{{\color{black}#1}}
    \theoremstyle{plain}
\newtheorem{theorem}{Theorem}
\theoremstyle{definition}
\newtheorem{definition}{Definition}
\newtheorem{assumption}[definition]{Assumption}
\theoremstyle{remark}
\newtheorem*{remark}{Remark}
\title{An energy-based summation-by-parts finite difference method for the wave equation in second order form}
\author{Siyang Wang\thanks{Department of Mathematics and Mathematical Statistics, Ume{\aa} University, {Ume\aa}, Sweden. Email: siyang.wang@umu.se}        \and
Daniel Appel\"{o}\thanks{Department of Computational Mathematics, Science \& Engineering and Department of Mathematics, Michigan State University, East Lansing, USA. Email: appeloda@msu.edu. 
This work was supported by NSF Grant DMS-1913076. Any opinions, findings, and conclusions or recommendations expressed in this material are those of the authors and do not necessarily reflect the views of the National Science Foundation.} \and
Gunilla Kreiss \thanks{Department of Information Technology, Uppsala University, Uppsala, Sweden. Email: gunilla.kreiss@it.uu.se}}
\begin{document}

\maketitle

\begin{abstract}
\revii{We develop a new finite difference method for the wave equation in second order form. The finite difference operators satisfy a summation-by-parts (SBP) property. With boundary conditions and material interface conditions imposed weakly by the simultaneous-approximation-term (SAT)  method, we derive energy estimates for the semi-discretization. In addition, error estimates are derived by the normal mode analysis. The proposed method is termed as energy-based because of its similarity with the energy-based discontinuous Galerkin method. When imposing the Dirichlet boundary condition and material interface conditions, the traditional SBP-SAT discretization uses a penalty term with a mesh-dependent parameter, which is not needed in our method. }Furthermore, numerical dissipation can be added to the discretization through the boundary and interface conditions.  We present numerical experiments that verify convergence and robustness of the proposed method.
\end{abstract}

\section{Introduction}
Many wave phenomena are governed by second order hyperbolic partial differential equations (PDEs), such as the wave equation, the elastic wave equation and Einstein's equations of general relativity. Also, equations that are formulated in first order form, such as Maxwell's equations, may often be reformulated in second order form \cite{Henshaw2006}. When a second order formulation is available it is typically formulated using fewer variables and fewer derivative operators, which can be exploited in the design of faster numerical methods. Many physical models are derived using Euler-Lagrange formalism starting from an energy, and it is natural to look for numerical methods that track the dynamics of this energy. We have considered such energy-based numerical methods in the context of discontinuous Galerkin discretizations, \cite{Appelo2015,Appelo2018,Appelo2019}. Here we generalize those ideas to high order finite difference methods.     

The classical dispersion error analysis by Kreiss and Oliger, \cite{Kreiss1972}, predicts that high order methods are more efficient than low order methods when used for the simulation of wave propagation problems with smooth coefficients. High order finite difference methods are computationally efficient for solving hyperbolic PDEs and such methods are also easy to implement if the simulation domains are not too complex. 

In the distant past it was difficult to construct stable and high order accurate finite difference methods, but this challenge has now largely been overcome through the use of derivative approximations with the summation-by-parts (SBP) property \cite{Gustafsson2013}, and boundary and interface conditions enforced through the use of  ghost points \cite{Petersson2010,Sjogreen2012}, or by the simultaneous-approximation-term (SAT) method \cite{Carpenter1994}. 

Summation by parts operators for the second derivative $\frac{d^2}{dx^2}$ and their extension to variable coefficients can be found in \cite{Mattsson2012,Mattsson2004}. When these SBP operators are used together with the SAT method to impose  boundary or interface conditions for the wave equation in a second order in time and space form, the resulting discretization bears similarity with the symmetric interior penalty discontinuous Galerkin method (SIPDG) \cite{Grote2006}. As for SIPDG,  coercivity requires that such SBP-SAT methods use a mesh-dependent penalty term. This penalty parameter, which depends on the material properties and the SBP operator, must be large enough for the method to be stable \cite{Almquist2020,Appelo2007,Mattsson2008,Mattsson2009}. Precise bounds on this penalty parameter may not be easy to determine a priori \cite{Duru2014V,Wang2018}, especially in the presence of grid interface conditions \cite{Mattsson2008}, curvilinear grids \cite{Virta2014} and nonconforming grid interfaces \cite{Almquist2019,Wang2018,Wang2016}. 


In this paper, we present an energy-based discretization in the SBP-SAT framework. \reviii{Here, energy-based refers to the design principle advocated in \cite{Appelo2015,Appelo2018,Appelo2019}. That is, design a semi-discretization that is based on the sum of the kinetic and potential energy, leading to a new class of DG methods.} For problems with only Neumann boundary conditions, the proposed method is the same as the traditional SBP-SAT discretization in \cite{Mattsson2004}. \reviii{However, for Dirichlet boundary conditions and grid interface conditions, our method is different from the traditional SBP-SAT method in \cite{Appelo2007,Mattsson2009} and \cite{Mattsson2008}, respectively}, because our method does not use any mesh-dependent parameter. The method discretizes the wave equation in the velocity-displacement form, i.e. as a system with first order derivatives in time and second order derivatives in space. The resulting system of ordinary differential equations can then be evolved by a Runge-Kutta time integrator or a Taylor series method. As mentioned above, the method is inspired by our energy-based discontinuous Galerkin (DG) method  \cite{Appelo2015,Appelo2018,Appelo2019}.  We show that, just as the energy-based DG method, numerical dissipation can naturally be included in the method. We also present a general framework for deriving error estimates by normal mode analysis and perform the detailed analysis for a fourth order discretization for the Dirichlet problem. We will use the same approach as in \cite{Wang2017}  to prove fourth order convergence rate for the dissipative discretization and third order convergence rate for the energy-conserving discretization, which agree well with our numerical verification. 

The outline of the paper is as follows. We introduce the SBP concepts in Section 2. 
In Section 3, we construct our new SBP-SAT discretization for problems with boundaries and grid interfaces. Stability analysis and a priori error estimates by normal mode analysis are then derived. In Section 4, we present an efficient implementation of the method for higher dimensional problems. Numerical experiments verifying accuracy and robustness are presented in Section 5. We end with concluding remarks in Section 6.  

\section{Preliminaries}

Let $P$ and $Q$ be twice continuously differentiable functions in $[0, 1]$.  We define the standard inner product and norm in $L^2([0, 1])$ as 
\[
(P,Q) = \int_0^1 P Q dx,\quad \|P\|^2 = (P,P).
\]
\reviii{The integration-by-parts principle reads
\be\label{ibp}
\int_{0}^{1} P (bQ_x)_x dx = -\int_{0}^{1} P_x bQ_x dx + P bQ_x|_{0}^{1},
\eb
where $b$ is a continuously differentiable function in $[0,1]$. }
\reviii{It can be used to derive energy estimates for the wave equation
\be\label{wave1d}
U_{tt} = (c^2(x)U_x)_x,\quad x\in [0, 1],
\eb
with the energy
\be\label{energy}
E=\|U_t\|^2 + \|cU_x\|^2,
\eb
consisting of a kinetic and potential part.} 
To make matters concrete consider \reviii{\eqref{wave1d}} with a Dirichlet boundary condition at $x=0$ and a Neumann boundary condition at $x=1$,
\be\label{bc}
U(0,t) = f(t),\quad U_x(1,t)=g(t),
\eb
supplemented with compatible and smooth initial conditions for $U$ and $U_t$. Also assume that the wave speed  $c(x) \equiv \sqrt{b(x)}$ is smooth and positive. Then, multiplying \eqref{wave1d} by $U_t$ and integrating in space, the integration-by-parts principle \eqref{ibp} leads to
\be\label{dEdt}
\f{d}{dt} E = 2bU_t U_x|_0^1.
\eb

With homogeneous boundary conditions, the boundary contribution in \eqref{dEdt} vanishes and the continuous energy estimate states that the energy of \eqref{wave1d} is constant in time. 


\subsection{Summation-by-parts finite difference operators}
Consider the one dimensional domain $[0,1]$ discretized by an equidistant grid $\bs{x} = [x_1,x_2,\cdots,x_n]^T$ with grid spacing $h=1/(n-1)$.  The SBP finite difference operator for the approximation of \reviii{the} second derivative on the grid $\bs{x}$ is defined as follows.

%


\begin{definition}\label{defD}
A difference operator $D^{(b)}\approx \f{d}{dx}\left(b(x)\f{d}{dx}\right)$ with $b(x)>0$ is an SBP operator if $D$ can be decomposed as 
\be\label{Ddecompose}
D^{(b)} = H^{-1} (-A^{(b)}-b_1\bs{e_1d_1^T} + b_n \bs{e_n d_n^T} ),
\eb
where $H$ is diagonal and positive definite, $A^{(b)}$ is symmetric and positive semidefinite, $\bs{e_1}=[1,0,\cdots,0,0]^T$, $\bs{e_n}=[0,0,\cdots,0,1]^T$. The column vectors $\bs{d_1}$ and $\bs{d_n}$ contain coefficients for the approximation of the first derivative at the boundaries. The coefficients $b_1$ and $b_n$ are the function $b(x)$ evaluated at the boundaries.
\end{definition}

Let $\bs{p}$ and $\bs{q}$ be grid functions on $\bs{x}$. The operators $H$ and $A^{(b)}$ define a weighted discrete $L^2$ norm $\|\bs{p}\|^2_H = \bs{p}^TH\bs{p}$ and seminorm $\|\bs{p}\|^2_{A^{(b)}}=\bs{p}^TA^{(b)}\bs{p}$. The SBP property \eqref{Ddecompose} is a discrete analogue of the integration-by-parts principle,
\begin{equation}\label{sbp}
\bs{p^T}H(D^{(b)} \bs{q})=-\bs{p^T}A^{(b)} \bs{q} + \bs{p^T} (-b_1\bs{e_1 d_1^T}+b_n\bs{e_n d_n^T}) \bs{q}.
\end{equation}
If $\bs{p}$, $\bs{q}$ are $P(x)$, $Q(x)$ evaluated on the grid, then the term on the left-hand side of \eqref{sbp} is an approximation of the term on the left-hand side of \eqref{ibp} because $H$ is a quadrature \cite{Hicken2013}. 
On the right-hand side of \eqref{Ddecompose}, the term $-H^{-1}A^{(b)}$ is a discrete approximation of the Laplacian operator with homogeneous Neumann boundary conditions. In \eqref{sbp}, the term $\bs{p^T}A^{(b)} \bs{q}\approx \int_{0}^{1} bP_x Q_x dx$. \reviii{Therefore, the vector of constants, $[a,a,\cdots,a]\in\mathbf{R}^n$ for any $a\in\mathbf{R}$, is in the null space of $A^{(b)}$.} We make the following assumption of the operator $A^{(b)}$. 
\begin{assumption}\label{assumptionA}
In Definition \ref{defD}, the rank of $A^{(b)}$ is $n-1$ with any vector of constants in its null space. 
\end{assumption}

For constant coefficient (the superscript $b$ is dropped), the matrix \reviii{$hA$} corresponding to the second order accurate SBP operator takes the form
\[
\begin{bmatrix}
1 & -1  &&&& \\
-1 & 2 & -1 &&& \\
& -1 & 2 & -1 && \\
& & \ddots & \ddots & \ddots & \\
 &  & &-1&2&-1 \\
 &  & &&-1&1
\end{bmatrix},
\]
with eigenvalues 
\[
\lambda_j = {4}\sin^2\left(\frac{\pi(j-1)}{2n}\right), \ j=1,2,\cdots,n. 
\]
We observe that all eigenvalues are distinct. Since $\lambda_1 = 0$, the rank of $A$  is $n-1$, and Assumption \ref{assumptionA} is true. \reviii{For the second and fourth order SBP operators with constant coefficient, Assumption \ref{assumptionA} is proved in \cite{Eriksson2021} using the result from \cite{Eriksson2020}, and the explicit formulas for the Moore-Penrose inverse of $A$ are also derived. }

\subsection{Accuracy of SBP operators}\label{sec_accuracy_sbp}
Standard central finite difference stencils are used in the interior of the computational domain. To satisfy the SBP property, special non-centered difference stencils are used on a few grid points close to boundaries. In the interior where the central stencils are used, the truncation error is of even order, often denoted by $2p$ with $p=1,2,3,\cdots$. On a few grid points with the non-centered boundary stencils, the truncation error can at best be of order $p$ when $H$ is diagonal. We denote the accuracy of such SBP operators $(2p,p)$. Note that it is also common to refer to the accuracy of the operator and scheme as $2p^{th}$ order accurate. It is then important to be specific with the precise truncation error and convergence rate of the discretization. 

Though the boundary truncation error is order $p$, the convergence rate of the underlying numerical scheme can be higher in certain cases. This is in part due to the fact that the number of grid points with the less accurate boundary stencils is independent of grid spacing. The precise order of convergence rate depends on the equation, the spatial discretization and the numerical boundary conditions, see further the detailed error analysis in \cite{Wang2017,Wang2018b}. Below, in Section \ref{sec_nm}, we derive error estimates for the proposed scheme and we see that the choice of the SAT affects the convergence rate.

\section{An energy-based SBP-SAT finite difference method}

In this section, we derive the energy-based SBP-SAT discretization of the wave equation \eqref{wave1d}. First, we consider boundary conditions in Section \ref{sec_bc}. We show \reviii{that} our method is equivalent to that in \cite{Mattsson2009} for  Neumann boundary conditions, but is different from the discretization in \cite{Mattsson2009} for  Dirichlet boundary conditions.  We then derive error estimates in Section \ref{sec_nm} and consider grid interface conditions in Section \ref{sec_gic}.

\subsection{The boundary conditions}\label{sec_bc}
Our SBP-SAT discretization is based on the approximation of the unknown variable $U$ and its time derivative $U_t$. Therefore, we rewrite equation \eqref{wave1d} to a system with the first order derivative in time
\be\label{wave1dsys}
\begin{split}
U_t &= V,\\
V_t &= (b(x)U_x)_x.
\end{split}
\eb
The energy-based SBP-SAT finite difference approximation of \eqref{wave1dsys} with the boundary condition \eqref{bc} is
\begin{align}
A^{(b)}(\bs{u}_t-\bs{v}) & =b_1\bs{d_1} (\bs{e}_{\bs{1}}^T \bs{v}-f_t) + \alpha \bs{d_n } (\bs{d}_{\bs{n}}^T \bs{u}-g),\label{wave1dsingle1}\\
\bs{v}_t &= D^{(b)} \bs{u}-b_n H^{-1}\bs{e_n} (\bs{d}_{\bs{n}}^T \bs{u}-g)+\beta H^{-1}\bs{e_1} (\bs{e}_{\bs{1}}^T \bs{v}-f_t),\label{wave1dsingle2}
\end{align}
where $\bs{u}$ and $\bs{v}$ are grid functions that approximate $U$ and $V$, respectively. On the right-hand side of \eqref{wave1dsingle1}, the first term imposes weakly the time derivative of the Dirichlet boundary condition $U_t(0, t)=f_t(t)$ \reviii{(note that the constant level of the solution is uniquely determined by the initial data and, being a constant, is not affected by this boundary condition)}. This penalty term \reviii{affects the stencils on a few grid points near the left boundary because of the weights in $\bs{d_1}$.} The second term is a dissipative term controlled by $\alpha$ and  contributes to a few grid points near the right boundary. On the left-hand side, $\bs{u}_t$ is approximately equal to $\bs{v}$. The symmetric positive semidefinite matrix $A^{(b)}$ is multiplied by $\bs{u}_t-\bs{v}$. In the traditional way of imposing the Dirichlet boundary condition in the SBP-SAT finite difference method, the corresponding penalty term is based on $U(0, t)=f(t)$ and involves a mesh-dependent penalty parameter \cite{Mattsson2009}. Such mesh dependent parameters are not needed in our energy-based discretization. 

\reviii{
The last term on the right-hand side of \eqref{wave1dsingle2} is a dissipation term that has contribution only on the first grid point and is controlled by the parameter $\beta$. We note the Neumann boundary condition is imposed weakly in the same way as in \cite{Mattsson2004}.  To see this, consider  \eqref{wave1dsingle1}-\eqref{wave1dsingle2} without terms for the Dirichlet boudary condition and the dissipation. Then,  \eqref{wave1dsingle1} is reduced to $\bs u_{tt}=\bs v_t$. Replacing $\bs v_t$ by $\bs u_{tt}$ in \eqref{wave1dsingle2} gives an equivalent formulation as in \cite{Mattsson2004}.
}

 The stability property of the semi-discretization is stated in the following theorem. 

\begin{theorem}\label{thm_bc}
With homogeneous boundary conditions, the energy-based SBP-SAT discretization \eqref{wave1dsingle1}-\eqref{wave1dsingle2} with $\alpha\leq 0$ and $\beta\leq 0$  satisfies 
\[
\f{d}{dt} E_H = 2\alpha (\bs{d}_{\bs{n}}^T \bs{u})^2  + 2\beta (\bs{e}_{\bs{1}}^T \bs{v})^2\leq 0.
\]
The discrete energy $E_H$ is defined as 
\[
 E_H \equiv (\|u\|_{A^{(b)}}^2+\|v||_H^2), 
\]
and is the discrete analogue of the continuous energy \eqref{energy}.
\end{theorem}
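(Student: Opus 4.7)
The plan is to differentiate $E_H$ and show that all terms except the dissipation terms cancel. Since $A^{(b)}$ and $H$ are symmetric, we have
\[
\tfrac{d}{dt} E_H = 2\bs{u}^T A^{(b)} \bs{u}_t + 2\bs{v}^T H \bs{v}_t,
\]
so the task reduces to expressing $\bs{u}^T A^{(b)} \bs{u}_t$ and $\bs{v}^T H \bs{v}_t$ via the scheme \eqref{wave1dsingle1}--\eqref{wave1dsingle2} and showing that the interior and boundary contributions match.

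For the first term, I would multiply \eqref{wave1dsingle1} from the left by $2\bs{u}^T$ and set $f_t = g = 0$. This yields
\[
2\bs{u}^T A^{(b)} \bs{u}_t = 2\bs{u}^T A^{(b)} \bs{v} + 2 b_1 (\bs{d_1}^T \bs{u})(\bs{e_1}^T \bs{v}) + 2\alpha (\bs{d_n}^T \bs{u})^2.
\]
For the second term, I would multiply \eqref{wave1dsingle2} from the left by $2\bs{v}^T H$ and use Definition~\ref{defD}, whose identity $HD^{(b)} = -A^{(b)} - b_1 \bs{e_1}\bs{d_1}^T + b_n \bs{e_n}\bs{d_n}^T$ gives
\[
2\bs{v}^T H \bs{v}_t = -2\bs{v}^T A^{(b)} \bs{u} - 2b_1 (\bs{e_1}^T \bs{v})(\bs{d_1}^T \bs{u}) + 2b_n (\bs{e_n}^T \bs{v})(\bs{d_n}^T \bs{u}) - 2b_n (\bs{e_n}^T \bs{v})(\bs{d_n}^T \bs{u}) + 2\beta (\bs{e_1}^T \bs{v})^2.
\]

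Adding these two identities, the $A^{(b)}$ cross terms cancel by symmetry of $A^{(b)}$; the two $b_1$ boundary terms are negatives of each other and cancel; the $b_n$ terms supplied by the SBP decomposition and by the Neumann SAT cancel by construction. What remains is exactly
\[
\tfrac{d}{dt} E_H = 2\alpha (\bs{d_n}^T \bs{u})^2 + 2\beta (\bs{e_1}^T \bs{v})^2,
\]
which is nonpositive under the sign restrictions $\alpha,\beta\le 0$.

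Conceptually there is no obstacle: the argument is a discrete mirror of the continuous calculation in \eqref{dEdt}, with the SBP identity \eqref{sbp} playing the role of integration by parts. The only thing requiring care is the bookkeeping that the penalty term in \eqref{wave1dsingle1} is placed on the left-hand side inside $A^{(b)}(\bs{u}_t-\bs{v})$ rather than on $\bs{u}_t$ alone; this is precisely what allows the spurious term $2\bs{u}^T A^{(b)} \bs{v}$ to pair against $-2\bs{v}^T A^{(b)} \bs{u}$ from the $\bs{v}_t$ equation and vanish. I would also remark that the argument does not invoke Assumption~\ref{assumptionA}, and that the two dissipation mechanisms act on independent boundary functionals: $\bs{d_n}^T \bs{u}$ (the discrete Neumann trace at $x=1$) and $\bs{e_1}^T \bs{v}$ (the discrete velocity trace at $x=0$).
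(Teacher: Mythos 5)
Your proof is correct and follows essentially the same route as the paper: multiply \eqref{wave1dsingle1} by $\bs{u}^T$ and \eqref{wave1dsingle2} by $\bs{v}^T H$, invoke the SBP decomposition \eqref{Ddecompose} together with the symmetry of $A^{(b)}$, and observe that all boundary cross terms cancel, leaving only the dissipation terms. Your added remarks (that Assumption~\ref{assumptionA} is not needed and that the penalty's placement inside $A^{(b)}(\bs{u}_t-\bs{v})$ is what makes the cancellation work) are accurate and consistent with the paper's discussion.
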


\begin{proof}
Consider homogeneous boundary conditions with $f=g=0$ in \eqref{bc}. \reviii{We multiply from the left of \eqref{wave1dsingle1} by $\bs{u}^T$, and \eqref{wave1dsingle2} by $\bs{v}^T H$, and  obtain}
\begin{align}
\bs{u}^T A^{(b)} (\bs{u}_t-\bs{v}) & = b_1\bs{u}^T \bs{d_1} \bs{e}_{\bs{1}}^T \bs{v} + \alpha \bs{u}^T \bs{d_n} \bs{d}_{\bs{n}}^T \bs{u} ,\label{wave1dsingle3}\\
\bs{v}^T H \bs{v}_t &= \bs{v}^T H D^{(b)} \bs{u}-b_n \bs{v}^T \bs{e_n} \bs{d}_{\bs{n}}^T \bs{u}+\beta \bs{v}^T \bs{e_1} \bs{e}_{\bs{1}}^T \bs{v}.\label{wave1dsingle4}
\end{align}
Adding \eqref{wave1dsingle3} and \eqref{wave1dsingle4}, we have 
\[
\f{d}{dt} E_H \equiv \f{d}{dt} (\|u\|_{A^{(b)}}^2+\|v||_H^2) = 2\alpha (\bs{d}_{\bs{n}}^T \bs{u})^2  +2 \beta (\bs{e}_{\bs{1}}^T \bs{v})^2\leq 0,
\]
if $\alpha\leq 0$ and $\beta\leq 0$. 
\end{proof}
If $\alpha< 0$ and $\beta< 0$, the discrete energy $E_H$ is dissipated even though the continuous energy is conserved. With $\alpha=\beta=0$, the discrete energy is constant in time. \revii{All four penalty terms in \eqref{wave1dsingle1}-\eqref{wave1dsingle2} have no  mesh-dependent parameters.}


\revii{In the first semi-discretized equation \eqref{wave1dsingle1}, the time derivative of the unknown variable $\bs{u}$ is given implicitly, since  $\bs{u}_t$ is multiplied by $A^{(b)}$.}  The matrix $A^{(b)}$ is banded and symmetric positive semi-definite, with nullspace consisting of vectors of constants. Since  both $\bs{d_1}$ and $\bs{d_n}$ are consistent finite difference stencils for the first derivative, the right hand side will always be in the range of $A^{(b)}$. In other words,  the solution exists but is only unique up to a constant. A unique solution can be obtained with an additional constraint. Here, we require that the sum of all elements in  $\bs{u}_t-\bs{v}$ are zero, consistent with the equation  $U_t=V$. Numerically, this constraint can be taken into account by the Lagrange multiplier technique. With the new variables
\[\tilde{A}^{(b)}=\begin{bmatrix}
 &  & & 1 \\
 & A^{(b)} & & \vdots \\
 & & &  1 \\
  1 & \cdots & 1& 0
\end{bmatrix}
,\ \tilde{\bs{u}}= \begin{bmatrix}
\\
\bs u \\
\\
\mu
\end{bmatrix}
,\ \tilde{\bs{v}}= \begin{bmatrix}
\\
\bs v \\
\\
\nu
\end{bmatrix}
,\ \tilde{\bs{d_1}}= \begin{bmatrix}
\\
\bs{d_1} \\
\\
0
\end{bmatrix}
,\ \tilde{\bs{d_n}}= \begin{bmatrix}
\\
\bs{d_n} \\
\\
0
\end{bmatrix},
\]
equation \eqref{wave1dsingle1} is replaced by
\begin{equation}\label{wave1dLM}
\tilde{A}^{(b)}(\tilde{\bs{u}}_t-\tilde{\bs{v}})  =b_1\tilde{\bs{d_1}} (\bs{e}_{\bs{1}}^T \bs{v}-f_t) + \alpha \tilde{\bs{d_n}} (\bs{d}_{\bs{n}}^T \bs{u}-g),
\end{equation}
which is nonsingular. The auxiliary variables $\mu$ and $\nu$ satisfy $\mu_t\approx\nu$. \revii{Alternatively, we can use the pseudoinverse of $A^{(b)}$. Since the right-hand side of \eqref{wave1dsingle1} is a summation of rank-one vectors, we only need a few columns of the pseudoinverse of $A^{(b)}$, which can be computed by using the analytical formula in \cite{Eriksson2020,Eriksson2021} for constant coefficient problems and $p=2$ or 4. After that, the resulting system of first order ordinary differential equations can be advanced explicitly in time by using standard time integrators, for example Runge-Kutta methods. }

\begin{remark} \label{rem:1}
At first glance, it appears that the formulation \eqref{wave1dsingle1} would have a higher computational complexity than comparable methods but, as we show in Section \ref{sec_high}, for constant coefficient systems there is a fast direct algorithm that results in a linear (in the number of degrees of freedom) complexity. For variable coefficients, we will illustrate by numerical examples that the preconditioned conjugate gradient method only requires a very small number of iterations per timestep to converge. 
\end{remark}


\subsection{Error estimates}\label{sec_nm}
As discussed in Section \ref{sec_accuracy_sbp}, the $2p^{th}$ order accurate SBP operators with diagonal norms are only $p^{th}$ order accurate on a few grid points near boundaries. In this section, we derive  error estimates and analyze the effect of the $p^{th}$ order truncation error on the overall convergence rate of the discretization. We note that the energy-based discretization for the Neumann problem is the same as the traditional SBP-SAT method \cite{Mattsson2004} and for this problem the error estimates derived in  \cite{Wang2017} already applies.  Below, we consider the problem with Dirichlet boundary conditions by first outlining the general approach \cite{Gustafsson2013} for error analysis and then specializing to the case when $p=2$. \reviii{As will be seen, dissipation at the Dirichlet boundary conditions affects the overall convergence rate.}  We note that the influence of dissipation for a discretization of the wave equation is also considered in \cite{Svard2019}. 

Consider the following half line problem 
\begin{align*}
U_t &= V, \\
V_t &=U_{xx},
\end{align*}
in the domain $x\in [0,\infty)$ with the Dirichlet boundary condition  $U(0,t)=f(t)$ and $t\in [0,t_f]$ for some final time $t_f$. The corresponding energy-based discretization is 
\begin{align}
A(\bs{u}_t-\bs{v})&=\bs{d_1}(\bs{e}_{\bs{1}}^T \bs{v}-f_t),\label{eqn1Dirichlet}\\
\bs{v}_t &= D \bs{u} + \beta H^{-1} \bs{e}_{\bs{1}} (\bs{e}_{\bs{1}}^T \bs{v}-f_t).\label{eqn2Dirichlet}
\end{align}
When $\beta\leq 0$, the discretization satisfies an energy estimate as in Theorem \ref{thm_bc}. We will see below that the energy-dissipative discretization with $\beta<0$ gives a higher convergence rate than the energy-conserving discretization with $\beta=0$.

Let $\bs{\xi}=[\xi_{1}, \xi_{2}, \cdots]^T$ and $\bs{\zeta}=[\zeta_{1}, \zeta_{2}, \cdots]^T$ be the pointwise error vector with $\xi_{j}=u_j(t)-U(x_j,t)$ and $\zeta_{j}=v_j(t)-V(x_j,t)$.  We then have the error equations
\begin{align*}
A(\bs{\xi}_t-\bs{\zeta})&=\bs{d_1}\bs{e}_{\bs{1}}^T \bs{\zeta},\\
\bs{\zeta}_t &= D \bs{\xi} + \beta H^{-1} \bs{e}_{\bs{1}} \bs{e}_{\bs{1}}^T \bs{\zeta}+\bs{T},
\end{align*}
where $\bs{T}$ is the truncation error vector. \reviii{Note that there is no truncation error in the first equation, because the equation is satisfied exactly by the true solution on the grid.} We partition the truncation error into two parts, the boundary truncation error $\bs{T^B}$ and the interior truncation error $\bs{T^I}$ such that $\bs{T}=\bs{T^B}+\bs{T^I}$.  The only nonzero elements of $\bs{T^B}$ are the first $r$ elements  corresponding to the boundary stencil of $D$ and are of order $\mathcal{O}(h^p)$, where $r$ depends on $p$ but not $h$. In $\bs{T^I}$, the first $r$ elements are zero and the rest are of order $\mathcal{O}(h^{2p})$ corresponding to the interior stencil of $D$. 

We partition the error as $\bs{\xi}=\bs{\xi^I}+\bs{\xi^B}$ and $\bs{\zeta}=\bs{\zeta^I}+\bs{\zeta^B}$. The first terms $\bs{\xi^I},\bs{\zeta^I}\sim \mathcal{O}(h^{2p})$ are caused by the interior truncation error $\bs{T^I}$ and can be estimated by the energy technique for SBP methods. It is often the second part, $\bs{\xi^B},\bs{\zeta^B}$ caused by the boundary truncation error $\bs{T^B}$, that determine the overall convergence rate of the scheme. 
We note that $\bs{\xi^B},\bs{\zeta^B}$ satisfy the error equations
\begin{align}
A((\bs{\xi^B})_t- \bs{\zeta^B})&=\bs{d_1}\bs{e}_{\bs{1}}^T \bs{\zeta^B},\label{ve1}\\
(\bs{\zeta^B})_t &= D \bs{\xi^B} + \beta H^{-1} \bs{e}_{\bs{1}} \bs{e}_{\bs{1}}^T \bs{\zeta^B}+\bs{T^B}.\label{ve2}
\end{align}
For convenience, we define the $h$-independent quantities 
\[
\wb{A} = hA,\quad \wb{H} = \f{1}{h}H,\quad \wb{\bs{d_1}} = h \bs{d_1},
\]
and the new variables  
\begin{equation}\label{newvariables}
\bs\delta_t=\bs{\zeta^B} \text{ and } \bs\ve = \bs{\xi^B}-\bs\delta.
\end{equation}
\reviii{We have the relation $\bs\ve_t = (\bs{\xi^B})_t- \bs{\zeta^B}$.}

Next, we take the Laplace transform of the error equations \eqref{ve1}-\eqref{ve2} in time. \reviii{With exact initial data}, we obtain the difference equations
\begin{align}
\wb{A} \wh{\bs{\ve}}-\wb{\bs{d_1}} \bs{e}_{\bs{1}}^T \wh{\bs{\delta}}&=\bs{0},\label{Tve1}\\
\bs{e}_{\bs{1}}\wb{\bs{d_1}}^T  \wh{\bs{\ve}} + (\wt{s}^2\wb{H}+\wb{A}+ \bs{e}_{\bs{1}} \wb{\bs{d_1}}^T  + \wb{\bs{d_1}} \bs{e}_{\bs{1}}^T - \beta\wt{s} \bs{e}_{\bs{1}} \bs{e}_{\bs{1}}^T ) \wh{\bs{\delta}}  &=  h^2\wb{H}\wh{\bs{T^B}},\label{Tve2}
\end{align}
where $\wh{\bs{\ve}}$ and $\wh{\bs{\delta}}$ are the Laplace-transform of ${\bs{\ve}}$ and ${\bs{\delta}}$, respectively. We also use the notation $\wt{s}=sh$, where $s$ is the dual of time. \reviii{Note that when deriving \eqref{Tve2}, we have used \eqref{Tve1} and the identity $D=H^{-1}(-A-\bs{e_1}\bs{d}_{\bs{1}}^T)$ because of the half-line problem. }

In Laplace space, we solve the difference equations \eqref{Tve1}-\eqref{Tve2} and use \eqref{newvariables} to derive  an error estimate for $\wh{\bs{\xi^B}}$. The corresponding error estimate for  $\bs{\xi^B}$ in physical space can then be obtained by Parseval's relation. The precise estimate depends on the operators in \eqref{Tve1}-\eqref{Tve2}. Below we consider the SBP operator with accuracy $(2p,p)=(4,2)$ constructed in \cite{Mattsson2004}. \reviii{The accuracy analysis follows the same procedure when other SBP operators are used.}

\begin{theorem}
With the SBP operator of accuracy order (4,2) from \cite{Mattsson2004}, the method \eqref{eqn1Dirichlet}-\eqref{eqn2Dirichlet} has convergence rate four with a  dissipative discretization $\beta<0$. For the energy-conserving discretization with $\beta=0$, the convergence rate is three. 
\end{theorem}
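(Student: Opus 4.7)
The plan is to carry out a normal mode analysis of the Laplace-transformed half-line error system \eqref{Tve1}--\eqref{Tve2}, closely following the strategy of \cite{Wang2017}. The idea is to first identify the bounded interior solutions by solving a constant-coefficient recurrence, then determine their amplitudes from the finite boundary closure of the (4,2) operator, and finally convert Laplace-space bounds into physical-space error estimates via Parseval's identity.

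In the interior the matrix $\wb{A}$ reduces to the constant-coefficient fourth-order stencil and $\wb{H}$ to the identity, so the ansatz $\wh{\delta}_j=\kappa^j$ in \eqref{Tve2} (where $\bs{T^B}$ is supported only on the first few entries) yields the characteristic equation
\[
\kappa^{4}-16\kappa^{3}+(30+12\wt{s}^{2})\kappa^{2}-16\kappa+1=0.
\]
For $\operatorname{Re}\wt{s}>0$ this admits exactly two admissible roots $\kappa_{1}(\wt{s}),\kappa_{2}(\wt{s})$ with $|\kappa|<1$, and the bounded interior part of $\wh{\bs{\delta}}$ is spanned by $\kappa_{1}^{j}$ and $\kappa_{2}^{j}$. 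The companion equation \eqref{Tve1}, $\wb{A}\wh{\bs{\ve}}=\wb{\bs{d_1}}(\bs{e_1}^{T}\wh{\bs{\delta}})$, contributes one further admissible mode coming from the non-unit reciprocal root of the symbol of $\wb{A}$ (the constant null-vector of $\wb{A}$ being ruled out by boundedness).

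Next I would substitute this ansatz into the finitely many boundary rows of \eqref{Tve1}--\eqref{Tve2} where the Mattsson closure deviates from the interior stencil and where $\bs{T^B}\neq\bs 0$, obtaining a linear system
\[
M(\wt{s},\beta)\,\bs\sigma=h^{p+2}\bs F(\wt{s},\bs{T^B})
\]
for the amplitude vector $\bs\sigma$, with right-hand side uniformly bounded in $h$ because $\bs{T^B}=\mathcal O(h^{p})$ with $p=2$. The technical heart of the argument is a Taylor expansion of $\det M(\wt{s},\beta)$ about $\wt{s}=0$: since the dissipative contribution $-\beta\wt{s}\,\bs{e_1}\bs{e_1}^{T}$ enters the boundary block linearly in $\wt{s}$, one expects $\det M(\wt{s},0)$ to vanish to order $m$ at the origin while $\det M(\wt{s},\beta)$ vanishes only to order $m-1$ for $\beta<0$. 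Inverting $M$, using the exponential decay of $\kappa_{1}^{j},\kappa_{2}^{j}$ to control the mode sum in the discrete $H$-norm, and applying Parseval then yields $\|\bs{\xi^B}\|_{H}=\mathcal O(h^{p+2})=\mathcal O(h^{4})$ when $\beta<0$ and $\|\bs{\xi^B}\|_{H}=\mathcal O(h^{p+1})=\mathcal O(h^{3})$ when $\beta=0$. Since the interior contribution $\bs{\xi^I}$ is controlled at rate $\mathcal O(h^{2p})=\mathcal O(h^{4})$ by the standard SBP energy estimate, the claimed global rates follow from $\bs{\xi}=\bs{\xi^I}+\bs{\xi^B}$.

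The main obstacle will be the explicit symbolic handling of $M(\wt{s},\beta)$ for the specific (4,2) boundary closure of \cite{Mattsson2004}: one must expand $\det M$ in powers of $\wt{s}$, isolate the precise coefficient at which $\beta$ first enters, and rigorously rule out spurious higher-order cancellations that would either erase the one-order gain for $\beta<0$ or accidentally produce a larger gain for $\beta=0$. A secondary subtlety is the passage between $\wh{\bs{\ve}}$, $\wh{\bs{\delta}}$ and $\wh{\bs{\xi^B}}$ through the definition \eqref{newvariables} together with the constant-vector null space of $\wb{A}$ from Assumption \ref{assumptionA}, which must be treated so that no spurious unbounded component is reintroduced into the estimate.
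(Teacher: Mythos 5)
Your framework is the same as the paper's: interior characteristic equations for \eqref{Tve1} and \eqref{Tve2} (your recurrence is the paper's multiplied through by $12$), a finite boundary system for the mode amplitudes, an expansion about $\wt s=0$ where the determinant condition fails, and Parseval to return to physical space. The genuine gap is in the step ``Inverting $M$ \dots{} yields $\|\bs{\xi^B}\|_{H}=\mathcal O(h^{4})$ when $\beta<0$.'' If, as you correctly anticipate, $\det M(\wt s,\beta)$ vanishes to exactly first order at $\wt s=0$ for $\beta<0$, then with $\operatorname{Re}\wt s=\eta h$ the inverse has an $\mathcal O(\wt s^{-1})$ direction, and an $\mathcal O(h^{4})$ right-hand side produces an amplitude component of size $\mathcal O(\wt s^{-1}h^{4})\sim\mathcal O(h^{3})$ --- so the naive power count gives rate three for $\beta<0$ as well, not four. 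The paper's rate-four conclusion hinges on an additional structural argument that your proposal omits: the $\mathcal O(h^{3})$ component lies along the null direction of $C(0)$, and mapping it back through the SVD factor $N$ forces the relations \eqref{zform}, in particular $\sigma_{2}=0$ (killing the slowly decaying root $\kappa_{2}=1-\mathcal O(\wt s)$) and the cancellation $\wh{\ve}_j+\wh{\delta}_j=0$ in the leading entries of $\wh{\bs{\xi^B}}=\wh{\bs{\ve}}+\wh{\bs{\delta}}$, together with $\lambda=\kappa_{1}+\mathcal O(\wt s^{2})$. Only after this cancellation is the error controlled by the remaining $\mathcal O(h^{4})$ components (giving $\mathcal O(h^{4.5})$ in the discrete norm for the boundary contribution, with the overall rate four then set by the interior error). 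You flag ``ruling out spurious cancellations'' as an obstacle, but the logic is the reverse: a specific cancellation must be \emph{established}, not excluded, or the $\beta<0$ claim fails.

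A secondary imprecision: for $\beta=0$ the same issue recurs --- the worst component is $\mathcal O(h^{2})$, not $\mathcal O(h^{3})$, and one must again show it is harmless for the same structural reason before concluding that the seven remaining $\mathcal O(h^{3})$ components (now contaminated by the non-vanishing $\kappa_{2}$ mode) set the rate at three. Your determinant-order bookkeeping happens to land on the right exponent here, but for the wrong component.
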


\begin{proof}
In this case, $\wh{\bs{T^B}}$ in \eqref{Tve2} is to the leading order 
\[
\wh{\bs{T^B}} = h^2 \wh U_{xxxx}(0,s) \left[\frac{11}{12}, -\frac{1}{12}, \frac{5}{516}, \frac{11}{588}, 0, 0, \cdots\right]^T.
\]

In  \eqref{Tve1}, the matrix $\wb{A}$ has boundary stencils in the first four rows and  repeated interior stencil from row five. \reviii{The only nonzeros of $\wb{\bs{d_1}} \bs{e}_{\bs{1}}^T$ are in the first four rows.}  Therefore, from row five the difference equation takes the form
\[
\f{1}{12}  \wh{{\ve}}_{j-2} - \f{4}{3}\wh{{\ve}}_{j-1} +\f{5}{2} \wh{{\ve}}_{j} - \f{4}{3}\wh{{\ve}}_{j+1} +\f{1}{12}\wh{{\ve}}_{j+2} = 0,\quad j=5, 6,\cdots.
\]
The corresponding characteristic equation 
\[
\f{1}{12}  \lambda^4 - \f{4}{3}\lambda^3 +\f{5}{2} \lambda^2 - \f{4}{3}\lambda +\f{1}{12}= 0
\]
has four solutions $7-4\sqrt{3}\approx 0.0718$, $7+4\sqrt{3}\approx 13.9282$, and a double root 1. The only admissible solution satisfying $|\lambda|<1$ is $\lambda=7-4\sqrt{3}$. As a consequence, the elements of the vector $\wh{\bs{\ve}}$ can be written as 
\[
 \wh{\bs{\ve}} = [\wh{\ve}_1, \wh{\ve}_2, \wh{\ve}_3, \sigma, \sigma \lambda, \sigma \lambda^2, \sigma \lambda^3,\cdots]^T,
\]
with four unknowns $\wh{\ve}_1$, $\wh{\ve}_2$, $\wh{\ve}_3$ and $\sigma$.  \reviii{Note that it is also possible to use three unknowns $\wh{\ve}_1, \wh{\ve}_2, \sigma$. We formulate the linear system with four unknowns to match the number of equations. In this case, we have the relation $\wh{\ve}_3=\sigma\lambda^{-1}$.}
These four unknowns, together with the unknowns in  $\wh{\bs{\delta}}$, are involved in the first four equations of \eqref{Tve1}. To solve for them, we also need to consider \eqref{Tve2}.

The difference equation from row five of \eqref{Tve2} takes the form 
\[
\f{1}{12}  \kappa^4 - \f{4}{3}\kappa^3 +\left(\f{5}{2}+\wt{s}^2\right) \kappa^2 - \f{4}{3}\kappa +\f{1}{12}= 0,
\]
and has two admissible roots 
\[\kappa_{1} = 7-4\sqrt{3}+\mathcal{O}(\wt{s}^2), \quad \kappa_{2} = 1-\mathcal{O}(\wt{s}).
\]
We note that the second admissible root $\kappa_{2}$ is a slowly decaying component at $\wt s\rightarrow 0^+$. The vector $\wh{\bs{\delta}}$ can then be written as 
\[
 \wh{\bs{\delta}} = [\wh{\delta}_1, \wh{\delta}_2, \sigma_{1} + \sigma_{2}, \sigma_{1} \kappa_{1}+\sigma_{2} \kappa_{2}, \sigma_{1} \kappa_{1}^2+\sigma_{2} \kappa_{2}^2, \sigma_{1} \kappa_{1}^3+\sigma_{2} \kappa_{2}^3, \cdots]^T,
\]
with four unknowns $\wh{\delta}_1$, $\wh{\delta}_2$, $\sigma_{1}$ and $\sigma_{2}$.

The first four equations of \eqref{Tve1} and the first four equations of \eqref{Tve2} lead to the eight-by-eight boundary system 
\begin{equation}\label{bs}
C(\wt{s},\beta)\z =\hat T_{uv},
\end{equation}
where the unknown vector $\z$ and the right-hand side vector $T_{uv}$ are 
\begin{align}
\z &= [\wh{\ve}_1, \wh{\ve}_2, \wh{\ve}_3, \sigma,\wh{\delta}_1, \wh{\delta}_2, \sigma_{1}, \sigma_{2}]^T, \label{vec_z} \\
\hat T_{uv} &= \left[0,0,0,0,\f{187}{576},-\f{59}{576},\f{5}{576},\f{11}{576}\right]^Th^4 \wh U_{xxxx}(0,s). \notag
\end{align}
\reviii{The nonzeros of $\hat T_{uv}$ are the nonzeros of $\wh{\bs{T^B}}$ scaled by the first four diagonal elements of $\wb H$, i.e. $\frac{17}{48}, \frac{59}{48}, \frac{43}{48}, \frac{49}{48}$.}
From \eqref{newvariables}, we have 
\begin{align}
\wh{\bs{\xi^B}}&=\wh{\bs{\ve}}+\wh{\bs{\delta}} \label{veuv}\\
&=[\wh{\ve}_1+\wh{\delta}_1, \wh{\ve}_2+\wh{\delta}_2, \wh{\ve}_3+\sigma_{1}+\sigma_{2}, \sigma+\sigma_{1} \kappa_{1}+\sigma_{2} \kappa_{2},\sigma \lambda+\sigma_{1} \kappa_{1}^2+\sigma_{2} \kappa_{2}^2,\cdots]^T ,\notag
\end{align}
which depends on all the eight unknowns in the vector $\z$. 

\reviii{To analyze convergence rate, we shall consider the solution to the boundary system in the vicinity of $\wt s\rightarrow 0^+$ corresponding to the asymptotic region when $h\rightarrow 0$. When the scheme is  stable with $\beta\leq 0$, the boundary system is non-singular for all $Re(\wt s)>0$ \cite{Gustafsson2013}. However, the solution to the boundary system may depend on $h$, and the precise dependence is important to the convergence rate. To this end, we analyze the inverse of $C(\wt s,\beta)$, and the components of $\z$ and $\wh{\bs{\xi^B}}$ in the vicinity of $\wt s\rightarrow 0^+$. }


We start by considering the boundary system \eqref{bs} with $\wt s=0$. Here, the matrix $C(0):=C(0,\beta)$ is independent of $\beta$ and \reviii{takes the form}
\begin{equation*}
C(0)= \begin{bmatrix}
1.1250  & -1.2292 &   0.0833  &  0.0208  &  1.8333   &      0 &        0 &        0 \\
   -1.2292  &  2.4583  & -1.2292    &     0 &  -3.0000   &      0 &        0&         0 \\
    0.0833  & -1.2292   & 2.2917   &-1.2232  &  1.5000   &      0  &       0  &       0\\
    0.0208    &     0 &  -1.2292 &   2.3630 &  -0.3333  &       0  &       0  &       0\\
   -1.8333 &   3.0000  & -1.5000  &  0.3333 &  -2.5417  &  1.7708 &  -1.3912 &  -1.0625\\
         0  &       0  &       0   &      0  &  1.7708  &  2.4583 &  -1.2292 &  -1.2292\\
         0  &       0   &      0   &      0  & -1.4167  & -1.2292  &  2.2038  &  1.1458\\
         0  &       0   &      0   &      0  &  0.3542   &      0  & -1.0595 &  -0.0208
\end{bmatrix}.
\end{equation*}
It is singular with one eigenvalue equal to zero, i.e. the so-called determinant condition is not satisfied. Since the matrix $C(\wt s, \beta)$ cannot be inverted at $\wt s=0$, we take a similar approach as in \cite{Nissen2012,Wang2017} and consider $Re(\wt s)=\eta h$ for a small constant $\eta>0$ independent of $h$. \revii{We also refer to \cite{Gustafsson2013} for this technique. }

The Taylor series of $C(\wt s, \beta)$ at $\wt s=0$ can be written as 
\be\label{ts}
C(\wt s,\beta)=C(0)+\wt s C'(0,\beta)+\f{{\wt s}^2}{2}C''(0,\beta)+\mathcal{O}({\wt s}^3),
\eb
where  $C'(0,\beta)$ and $C''(0,\beta)$ are the first and second derivative of $C(\wt s,\beta)$ with respect to $\wt s$ at $\wt s=0$, respectively. We perform a singular value decomposition of the singular matrix $C(0)=M\Sigma N^*$ with two unitary matrices $M$ and $N$. Substituting into the Taylor series, we obtain the boundary system to the leading order
\be\label{bs1}
(\Sigma + \wt s C_\beta)\bz = M^* \hat T_{uv},
\eb
where $\bz = N^* \z$ and $C_\beta=M^* C'(0,\beta) N$. The solution can be written as 
\[
\bz = (\Sigma + \wt s C_\beta)^{-1} M^* \hat T_{uv},
\]
and consequently we have 
\[
\z=N  (\Sigma + \wt s C_\beta)^{-1} M^* \hat T_{uv}.
\]
We note that $\Sigma$ is a diagonal matrix where the first seven diagonal components are nonzero and the last diagonal component equals to zero. The last diagonal element of $C_\beta$, denoted by $(C_\beta)_{88}$, is crucial. We find that $(C_\beta)_{88}$ is zero when $\beta=0$ and nonzero when $\beta<0$. We analyze these two cases separately.

When $\beta<0$, we have $(C_\beta)_{88}\neq 0$. In this case, we have 
\[
(\Sigma + \wt s C_\beta)_{ij}=\begin{cases}
\mathcal{O}(1), \ 1\leq i=j\leq 7, \\
\mathcal{O}(\wt s), \ i=j=8, \\
\mathcal{O}(\wt s) \text{ or } 0, \ i\neq j.
\end{cases}
\]
\reviii{We use Gaussian elimination to reduce the linear system \eqref{bs1} to a triangular form.} The resulting upper triangular matrix has diagonal elements $\mathcal{O}(1)$, except the last diagonal element $\mathcal{O}(\wt s)$. Using that all elements of $M^* \hat T_{uv}$ are $\mathcal{O}(h^4)$, the backward substitution procedure gives the solution $\bz$ in the form such that its first seven elements are $\mathcal{O}(h^4)$ and the last element is $\mathcal{O}({\wt s}^{-1} h^4)\sim \mathcal{O}(h^3)$. 

The dominating error component $\mathcal{O}(h^3)$ in $\bz$ is a potential source of accuracy reduction. To analyze its effect to the convergence rate, we only consider this dominating component in $\bz$, that is  $[0,0,0,0,0,0,0,1]^T Kh^3$ for some constant $K$. By the relation $\z=N\bz$, the corresponding part of $\z$ can be computed as $\z=N [0,0,0,0,0,0,0,1]^T Kh^3$. \reviii{A direct calculation of $\z$ shows that its components satisfy the following relations,}
\be\label{zform}
\wh{\ve}_1+\wh{\delta}_1= \wh{\ve}_2+\wh{\delta}_2= \wh{\ve}_3+\sigma_{1} =0,\ \sigma+\sigma_{1}\kappa_{1}=0,\ \sigma_{2}=0.
\eb     

The last relation $\sigma_{2}=0$ is important because in the error vector \eqref{veuv}, the variable $\sigma_{2}$ is multiplied with the slowly decaying component $\kappa_{2}$, which is now eliminated. By using the first relation in \eqref{zform} and the relation $\lambda=\kappa_{1}+\mathcal{O}({\wt s}^2)$, the error vector \eqref{veuv} becomes
\begin{align*}
\wh{\bs{\xi^B}}=h^3[0,0,0,0,\mathcal{O}({\wt s}^2), \mathcal{O}({\wt s}^4),\cdots]^T. 
\end{align*}
Therefore, the dominating error component $\mathcal{O}(h^3)$ in $\bz$ does not lead to a convergence rate reduction. As a consequence, the error $\wh{\bs{\xi^B}}$ is determined by the first seven elements of $\bz\sim\mathcal{O}(h^4)$. It is then straightforward to compute
\begin{align*}
\left\|\wh{\bs{\xi^B}}\right\|_h\leq \wt K h^{4.5} |\wh U_{xxxx}(0,t)|
\end{align*}
for some constant $\wt K$. 
By using Parseval's relation, we have 
\[
\int_0^{t_f} \|\bs{\xi^B}\|_h dt \leq K h^{4.5}e^{2\eta t_f} \int_0^{tf} |U_{xxxx}(0,t)|dt
\]
for some constant $K$. In the above, we have used the argument ``future cannot affect past''  \cite[pp.~294]{Gustafsson2013}. Finally, the overall error $\bs{\xi}$ is in fact determined by the interior scheme. We conclude that the scheme has a fourth order convergence rate when $\beta<0$.

Now, we consider the case $\beta=0$. Since $(U_c^*C'(0,\beta)V_c)_{88}=0$, it is necessary to include the quadratic term of the Taylor series \eqref{ts} in the boundary system analysis. A direct calculation gives  $(U_c^*C''(0,\beta)V_c)_{88}\neq 0$. The solution $\bz$ to the boundary system is then in the form such that its first seven components are $\mathcal{O}(h^3)$ and the last component is $\mathcal{O}(h^2)$. The first seven components $\mathcal{O}(h^3)$ lead to $\|\wh{\bs{\xi^B}}\|_h  \leq K_3 h^{3} |\wh U_{xxxx}(0,t)|$ for some constant $K_3$, and the \revii{dominating} error component $\mathcal{O}(h^2)$ does not lead to further reduction in convergence rate for the same reason as the case with $\beta<0$.  \reviii{Hence, the convergence rate is three when $\beta=0$. Note that in this case, the slowly decaying component $\kappa_{2}$ does not vanish in $\wh{\bs{\xi^B}}$.}  This concludes the proof.
\end{proof}

 
\subsection{Interface conditions}\label{sec_gic}
In heterogeneous materials, a multi-block finite difference discretization can be advantageous. Different grid spacings can be used in different blocks to adapt to the velocity structure of the material. If the material property is discontinuous, the material interfaces can be aligned with block boundaries so that high order accurate discretization can be constructed in each block.

As a model problem, we again consider the wave equation \eqref{wave1dsys} in the domain [-1,1]. The parameter $b(x)$ is smooth in $(-1,0)$ and $(0,1)$, but discontinuous at $x=0$. 
In the stability analysis, we omit terms corresponding to the boundaries $x=0,1$ and focus on the interface contribution. The energy \eqref{energy} is conserved in time if we impose the interface conditions 
\begin{align}
\lim_{\epsilon\rightarrow 0^+} U(-\epsilon,t) &= \lim_{\epsilon\rightarrow 0^+}  U(\epsilon,t), \label{int1}\\
\lim_{\epsilon\rightarrow 0^+} b(-\epsilon)U_x(-\epsilon,t) &= \lim_{\epsilon\rightarrow 0^+}  b(\epsilon) U_x(\epsilon,t). \label{int2}
\end{align}
In this case,  at $x=0$ the solution is continuous but its derivative is discontinuous. 

We proceed by deriving an energy-based SBP-SAT discretization. To distinguish notations in the two subdomains, we use a tilde symbol on top of the variables and operators in $[0, 1]$. The semi-discretization reads 

\begin{align}
&A^{(b)} (\bs{u}_t-\bs{v})  =-\tau b_n \bs{d_n} (\bs{e}_{\bs{n}}^T\bs{v}-\bs{\tilde e}_{\bs{1}}^T \bs{\tilde v}),\label{semi_u} \\
&\bs{v}_t = D^{(c)} \bs{u}-(1-\tau) H^{-1} \bs{e_n} (b_n \bs{d}_{\bs n}^T \bs{u}-\tilde b_1\bs{\tilde d}_{\bs 1}^T {\tilde u} )+\gamma H^{-1} \bs{e_n} (\bs{e}_{\bs n}^T \bs{v}-\bs{\tilde e}_{\bs{1}}^T \bs{\tilde v}), \label{semi_v} 
\end{align}
in the subdomain $[-1,0]$, and
\begin{align}
&\tilde A^{(b)} (\bs{\tilde u}_t-\bs{\tilde v})  =(1-\tau) \tilde b_1 \bs{\tilde d_1} (\bs{\tilde e}_{\bs 1}^T \bs{\tilde v}-\bs{e}_{\bs n}^T \bs{v}), \label{semi_u_tilde}  \\
&\bs{\tilde v}_t = \tilde D^{(b)} \bs{\tilde u}+\tau \tilde H^{-1} \bs{\tilde e_1} (\tilde b_1 \bs{\tilde d}_{\bs 1}^T \bs{\tilde u}-b_n \bs{d}_{\bs n}^T \bs{u} )+\gamma \tilde H^{-1} \bs{\tilde e_1} (\bs{\tilde e}_{\bs 1}^T \bs{\tilde v}-\bs{e}_{\bs n}^T \bs{v}). \label{semi_v_tilde}
\end{align}
in the subdomain $[0,1]$. Similar to the Dirichlet boundary condition, we impose continuity of the time derivative of the solution, instead of continuity of the solution itself. Unlike in \cite{Virta2014}, no mesh-dependent parameter is needed to impose interface conditions by the SAT method. The stability property is summarized in the following theorem.

\begin{theorem}
The semi-discretization \eqref{semi_u}-\eqref{semi_v_tilde} satisfies
\[
\f{d}{dt}E_H = 2\gamma (\bs{e}_{\bs{n}}^T \bs{v}-\bs{\tilde e}_{\bs 1}^T \bs{\tilde v})^2\leq 0,
\]
where the discrete energy is defined as $E_H \equiv \|\bs{u}\|_{A^{(b)}}^2 + \|\bs{v}\|_H^2 + \|\bs{\tilde u}\|_{\tilde A^{(b)}}^2 + \|\bs{\tilde v}\|_{\tilde H}^2$ 
 for any $\tau$ and $\gamma\leq 0$. 
\end{theorem}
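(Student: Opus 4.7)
The plan is to mimic the strategy of Theorem \ref{thm_bc}: form a discrete energy identity by multiplying each of the four semi-discrete equations from the left by a suitable row vector, invoke the SBP identity \eqref{sbp} to convert the spatial derivative terms into volume parts plus interface contributions, and add everything so that the right-hand side collapses to a perfect square proportional to $\gamma$.

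Concretely, I would multiply \eqref{semi_u} by $\bs{u}^T$, \eqref{semi_v} by $\bs{v}^T H$, \eqref{semi_u_tilde} by $\bs{\tilde u}^T$, and \eqref{semi_v_tilde} by $\bs{\tilde v}^T \tilde H$. Symmetry of $A^{(b)}$ turns $\bs{u}^T A^{(b)} \bs{u}_t$ into $\f{1}{2}\f{d}{dt}\|\bs{u}\|_{A^{(b)}}^2$, and the same for the tilde block. Applying \eqref{sbp} to $\bs{v}^T H D^{(b)} \bs{u}$ in the left subdomain produces $-\bs{v}^T A^{(b)} \bs{u}$ plus two boundary terms, of which only the interface one $b_n \bs{v}^T \bs{e_n} \bs{d}_n^T \bs{u}$ is kept, since outer boundary contributions at $x = \pm 1$ are omitted by assumption. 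The $\bs{v}^T A^{(b)} \bs{u}$ part cancels against $\bs{u}^T A^{(b)} \bs{v}$ from \eqref{semi_u} by symmetry; likewise in the right subdomain.

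For readable bookkeeping I would introduce the scalar shorthands $a = b_n \bs{d}_n^T \bs{u}$, $\tilde a = \tilde b_1 \bs{\tilde d}_1^T \bs{\tilde u}$, $p = \bs{e}_n^T \bs{v}$, $\tilde p = \bs{\tilde e}_1^T \bs{\tilde v}$. After collecting terms, the left subdomain delivers
\[
\f{1}{2}\f{d}{dt}\left(\|\bs{u}\|_{A^{(b)}}^2 + \|\bs{v}\|_H^2\right) = \tau a \tilde p + (1-\tau) p \tilde a + \gamma p (p - \tilde p),
\]
where the $ap$ contribution from the SBP interface term cancels against the $-(1-\tau) p a$ from the SAT penalty and the $-\tau a p$ from \eqref{semi_u}. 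A parallel computation on the right subdomain yields
\[
\f{1}{2}\f{d}{dt}\left(\|\bs{\tilde u}\|_{\tilde A^{(b)}}^2 + \|\bs{\tilde v}\|_{\tilde H}^2\right) = -\tau a \tilde p - (1-\tau) p \tilde a + \gamma \tilde p (\tilde p - p),
\]
with the same structural cancellation of the $\tilde a \tilde p$ pieces.

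Adding the two identities, the mixed terms weighted by $\tau$ and $1-\tau$ cancel pairwise across the interface, which is the design principle behind this SAT: the splitting parameter $\tau$ disappears from the energy balance entirely. What remains is $\gamma p(p - \tilde p) + \gamma \tilde p(\tilde p - p) = \gamma(p - \tilde p)^2$, giving $\f{d}{dt} E_H = 2\gamma(\bs{e}_n^T \bs{v} - \bs{\tilde e}_1^T \bs{\tilde v})^2$, non-positive precisely when $\gamma \leq 0$. The only real challenge is careful bookkeeping: each $a\tilde p$ and $\tilde a p$ contribution must be traced to its correct source (SBP interface term versus SAT penalty), and the signs followed across both subdomains so that the cancellation in $\tau$ becomes manifest. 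No ingredients beyond the SBP identity and the symmetry of $A^{(b)}$, $\tilde A^{(b)}$ are required.
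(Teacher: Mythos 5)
Your proposal is correct and takes essentially the same route as the paper: multiply the four semi-discrete equations by $\bs{u}^T$, $\bs{v}^T H$, $\bs{\tilde u}^T$, $\bs{\tilde v}^T \tilde H$, use the SBP decomposition and symmetry of $A^{(b)}$, $\tilde A^{(b)}$, and add so that all $\tau$-weighted interface terms cancel, leaving $2\gamma(\bs{e}_{\bs n}^T\bs{v}-\bs{\tilde e}_{\bs 1}^T\bs{\tilde v})^2$. You simply supply the intermediate bookkeeping the paper omits, and you correctly use the multiplier $\bs{v}^T H$ (the paper's text writes $\bs{v}^T H^{-1}$, an apparent typo, since only $\bs{v}^T H\bs{v}_t$ yields $\f{1}{2}\f{d}{dt}\|\bs{v}\|_H^2$).
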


\begin{proof}
We multiply \eqref{semi_u} by $\bs{u}^T$, \eqref{semi_v} by \reviii{$\bs{v}^TH$}, \eqref{semi_u_tilde} by $\bs{\tilde u}^T$, \eqref{semi_v_tilde} by \reviii{$\tilde{\bs{v}}^T\tilde H$}. After adding all the four equations, we obtain 
\[
\f{d}{dt}E_H = 2\gamma (\bs{e}_{\bs{n}}^T \bs{v}-\bs{\tilde e}_{\bs 1}^T \bs{\tilde v})^2,
\]
where the discrete energy is $E_H = \|\bs{u}\|_{A^{(b)}}^2 + \|\bs{v}\|_H^2 + \|\bs{\tilde u}\|_{\tilde A^{(b)}}^2 + \|\bs{\tilde v}\|_{\tilde H}^2$. \qed
\end{proof}
\revii{The penalty parameters $\tau$ and $\gamma$ do not depend on the mesh size.} We have $\f{d}{dt}E_H\leq 0$ when $\gamma\leq 0$. In particular, if $\gamma=0$, then the discrete energy is conserved in time. 
We note that the linear system involving $A^{(b)}$ and $\tilde A^{(b)}$ can be solved separately in each domain, thus resulting a linear computational complexity with respect to the number of degrees of freedom. 

\section{Discretization in higher space dimensions}\label{sec_high}
The one-dimensional discretization technique can be generalized to higher dimensional problems by tensor products. 
As an example, we consider the wave equation in two space dimensions (2D)
 \be\label{wave2dsys}
\begin{split}
U_t &= V,\\
V_t &= (a(x,y)U_x)_x + (b(x,y)U_y)_y +F,
\end{split}
\eb
in the domain $\Omega=[0,1]^2$ with Dirichlet boundary conditions 
\[
U(x,y,t) = f(x,y,t) \text{ on } \p\Omega,
\] 
and a forcing function $F$. 
We discretize $\Omega$ by a Cartesian grid with $n_x$ points in $x$ and $n_y$ points in $y$. The semi-discretization can be written as
\begin{align}
\mathbf{A} (\bs{u}_t-\bs{v})  = &\ {d_W} H_y \left( e_W^T \bs{v}- \bs{f_{tW}}\right)  
 - d_E H_y(e_E^T \bs{v}- \bs{f_{tE}})   \notag\\
 &+ d_S H_x ( e_S^T \bs{v}- \bs{f_{tS}})  
 - d_N H_x ( e_N^T \bs{v}- \bs{f_{tN}}),  \label{semi2D1} \\
\bs{v_t} = &\ \mathbf{D}\bs{u}+\bs{F}+\theta \mathbf{H}^{-1} [e_W H_y (e_W^T\bs{v}-\bs{f_{tW}})+e_E H_y (e_E^T\bs{v}-\bs{f_{tE}}) \notag\\
&+e_S H_x (e_S^T\bs{v}-\bs{f_{tS}})+e_N H_x (e_N^T\bs{v}-\bs{f_{tN}})]. \label{semi2D2}
\end{align}
The operator $\mathbf{D}$ in \eqref{semi2D2} approximates the second derivative with variable coefficients in 2D and is defined as 
\begin{align*}
\mathbf{D} & = \sum_{i=1}^{ny}  D^{a_{:,i}}\otimes E_y^i + \sum_{j=1}^{nx}  E_x^j\otimes D^{b_{j,:}},
\end{align*} 
where $D^{a_{:,i}}$ approximates $\frac{\partial}{\partial x}\left(a(x,y_i)\frac{\partial}{\partial x}\right)$ and the only nonzero element in the $n_y$ by $n_y$ matrix $E_y^i$ is $E_y^i(i,i) = 1$. The operators corresponding to the term in the $y$-direction is defined similarly. The operator $\mathbf{H}=H_x\otimes H_y$ defines the 2D SBP norm and quadrature. In addition, we also have in \eqref{semi2D1} that
\begin{align*}
\mathbf{A}= \sum_{i=1}^{n_y} A^{a_{:,i}}\otimes E_y^i H_y + \sum_{j=1}^{n_x} E_x^j H_x\otimes A^{b_{j,:}},
\end{align*} 
where $A^{a_{:,i}}$ is the symmetric semidefinite matrix associated with $D^{a_{:,i}}$. The right-hand side of \eqref{semi2D1} are SAT imposing the Dirichlet boundary conditions. We define
\begin{align*}
d_W &= \sum_{i=1}^{n_y} d^{a_{1,i}}\otimes E_y^i,\ d_E=\sum_{i=1}^{n_y} d^{a_{nx,i}}\otimes E_y^i,\\
d_S&=\sum_{j=1}^{n_x}E_x^j\otimes d^{b_{j,1}},\ d_N  = \sum_{j=1}^{n_x} E_x^j\otimes d^{b_{j,ny}},
\end{align*}
where $d^{a_{1,i}}$ approximates the first derivative $a(x_1,y_i)\f{\p}{\p x}$ and is associated with $D^{a_{:,i}}$. The $n_y$ by 1 vector $\bs{f_{tW}}$ is the time derivative of the Dirichlet boundary data evaluated on the grid of the left boundary $x=0$.  We use the following operators to select the numerical solutions on the boundary
\begin{align*}
e_W & = e_{1x}\otimes I_y, \ e_E =  e_{nx}\otimes \reviii{I_y}, \ e_S  = I_x\otimes e_{1y}, \ e_N  = I_x\otimes e_{ny}.
\end{align*}
Finally, the \reviii{third} term on the right-hand side of \eqref{semi2D2} corresponds to numerical dissipation and the parameter $\theta$ is determined by the energy analysis. The grid function $\bs{F}$ is the forcing function $F$ evaluated on the grid. 

\begin{theorem}
The semi-discretization \eqref{semi2D1}-\eqref{semi2D2} satisfies
\[
\f{d}{dt}E_H\leq 0,
\]
if $\theta\leq0$, where the discrete energy is $E_H\equiv \bs{u}^T \mathbf{A} \bs{u}+\bs{v}^T\mathbf{H}\bs{v}$.
\end{theorem}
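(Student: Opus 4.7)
The plan is to mirror the structure of the 1D proof (Theorem 1) but work in tensor-product notation to handle all four boundaries simultaneously. I assume homogeneous boundary data $\bs{f_{tW}} = \bs{f_{tE}} = \bs{f_{tS}} = \bs{f_{tN}} = \bs{0}$ and forcing $\bs{F} = \bs{0}$, as in the 1D theorem.

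First I would multiply \eqref{semi2D1} on the left by $\bs{u}^T$ and multiply \eqref{semi2D2} on the left by $\bs{v}^T \mathbf{H}$. Since $\mathbf{A}$ is symmetric, $\bs{u}^T \mathbf{A} \bs{u}_t = \frac{1}{2}\frac{d}{dt}\|\bs{u}\|_{\mathbf{A}}^2$, and similarly since $\mathbf{H}$ is symmetric positive definite, $\bs{v}^T \mathbf{H} \bs{v}_t = \frac{1}{2}\frac{d}{dt}\|\bs{v}\|_{\mathbf{H}}^2$. The cross term $-\bs{u}^T \mathbf{A}\bs{v}$ from the first equation must be cancelled by a matching contribution from the second, which is the heart of the argument.

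The key step is a 2D SBP identity obtained by tensoring the 1D decomposition in Definition \ref{defD}. For each fixed $i$, the 1D SBP property gives
\begin{equation*}
H_x D^{a_{:,i}} = -A^{a_{:,i}} - e_{1x}(d^{a_{1,i}})^T + e_{nx}(d^{a_{nx,i}})^T,
\end{equation*}
and tensoring with $H_y E_y^i$, then summing over $i$ (using $\sum_i H_y E_y^i = H_y$ because $H_y$ is diagonal), and doing the analogous computation in $y$, yields
\begin{equation*}
\mathbf{H}\mathbf{D} = -\mathbf{A} - e_W H_y d_W^T + e_E H_y d_E^T - e_S H_x d_S^T + e_N H_x d_N^T.
\end{equation*}
Substituting this into $\bs{v}^T \mathbf{H} \mathbf{D} \bs{u}$ produces $-\bs{v}^T \mathbf{A} \bs{u}$ plus four boundary bilinear forms in $\bs{v}$ and $\bs{u}$. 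By symmetry of $\mathbf{A}$ these combine with the cross term $-\bs{u}^T \mathbf{A}\bs{v}$ from the first equation into $-\frac{d}{dt}$-free form, and the boundary bilinear forms are precisely the ones appearing on the right-hand side of \eqref{semi2D1} after transposition, so they cancel exactly.

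After the cancellation, adding the two equations leaves only the four dissipation terms from the last line of \eqref{semi2D2}:
\begin{equation*}
\tfrac{1}{2}\tfrac{d}{dt}E_H = \theta\bigl[(e_W^T \bs{v})^T H_y (e_W^T \bs{v}) + (e_E^T \bs{v})^T H_y (e_E^T \bs{v}) + (e_S^T \bs{v})^T H_x (e_S^T \bs{v}) + (e_N^T \bs{v})^T H_x (e_N^T \bs{v})\bigr].
\end{equation*}
Since $H_x, H_y$ are diagonal positive definite, each quadratic form on the right is non-negative, and the conclusion $\frac{d}{dt}E_H \leq 0$ follows whenever $\theta \leq 0$. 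The main obstacle is purely the bookkeeping of the tensor-product identity and verifying that the SAT coefficients in \eqref{semi2D1} carry exactly the correct signs $(+,-,+,-)$ to match the SBP boundary terms on all four edges; once that is checked, the argument is a direct extension of Theorem \ref{thm_bc}.
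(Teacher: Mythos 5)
Your proposal is correct and follows essentially the same route as the paper: multiply \eqref{semi2D1} by $\bs{u}^T$ and \eqref{semi2D2} by $\bs{v}^T\mathbf{H}$, use the 2D SBP identity $\mathbf{H}\mathbf{D} = -\mathbf{A} - e_W H_y d_W^T + e_E H_y d_E^T - e_S H_x d_S^T + e_N H_x d_N^T$ to cancel the cross terms and the boundary bilinear forms against the SATs, and conclude $\f{d}{dt}E_H = 2\theta\,\bs{v}^T(e_W H_y e_W^T\bs{v}+e_E H_y e_E^T\bs{v}+e_S H_x e_S^T\bs{v}+e_N H_x e_N^T\bs{v})\leq 0$ for $\theta\leq 0$. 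The only difference is that you spell out the tensor-product derivation of the 2D SBP identity from Definition \ref{defD}, which the paper uses implicitly in the proof and only states explicitly later in Section \ref{sec_cc}.
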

\begin{proof}
Consider homogeneous boundary and forcing data. 
Multiplying \eqref{semi2D1} by $\bs{u}^T$, we have 
\begin{align}\label{ea2D1}
\bs{u}^T \mathbf{A} \bs{u}_t =&\ \bs{u}^T \mathbf{A} \bs{v} + \bs{u}^T{d_W} H_y e_W^T \bs{v} - \bs{u}^T d_E H_y e_E^T \bs{v} + \bs{u}^T d_S H_x e_S^T \bs{v} - \bs{u}^T d_N H_x  e_N^T \bs{v}.
\end{align}
Similarly, we multiply \eqref{semi2D2} by $\bs{v}^T\mathbf{H}$ and obtain
\begin{align}\label{ea2D2}
\bs{v}^T\mathbf{H}\bs{v}_t =&\ \bs{v}^T\mathbf{H} \mathbf{D}\bs{u}+\theta \bs{v}^T (e_W H_y e_W^T\bs{v}+e_E H_y e_E^T\bs{v}+e_S H_x e_S^T\bs{v}+e_N H_x e_N^T\bs{v}) \notag\\
=&-\bs{v}^T\mathbf{A} \bs{u} -\bs{v}^T e_W H_y d_W^T \bs{u} + \bs{v}^T e_E H_y d_E^T \bs{u} - \bs{v}^T e_S H_x d_S^T \bs{u} + \bs{v}^T e_N H_x d_N^T \bs{u}\notag\\
&\ +\theta \bs{v}^T (e_W H_y e_W^T\bs{v}+e_E H_y e_E^T\bs{v}+e_S H_x e_S^T\bs{v}+e_N H_x e_N^T\bs{v}).
\end{align}
We then add \eqref{ea2D1} and \eqref{ea2D2} to obtain
\begin{align}
\f{d}{dt}(\bs{u}^T \mathbf{A} \bs{u}+\bs{v}^T\mathbf{H}\bs{v}) = &\ 2 \bs{u}^T \mathbf{A} \bs{u}_t+2\bs{v}^T\mathbf{H}\bs{v}_t \notag\\
=&\ 2\theta \bs{v}^T (e_W H_y e_W^T\bs{v}+e_E H_y e_E^T\bs{v}+e_S H_x e_S^T\bs{v}+e_N H_x e_N^T\bs{v}).
\end{align}
Therefore, the discrete energy $E_H\equiv\bs{u}^T \mathbf{A} \bs{u}+\bs{v}^T\mathbf{H}\bs{v}$ decays in time if $\theta<0$ and \reviii{is conserved} in time if $\theta=0$. \qed
\end{proof}


To advance in time the two dimensional semi-discretized equations \eqref{semi2D1}-\eqref{semi2D2}, we need to isolate the ${\bf u}_t$ term. In Section \ref{sec_cc} we show that when the problem has constant coefficients this can be efficiently done through the diagonalization technique \revii{first proposed in \cite{lynch1964direct} and more recently used in \cite{Zhang2020a} for a Galerkin-difference method. }In Section \ref{sec_vc} we illustrate that the variable coefficient case can be handled by the use of iterative solvers. 

\subsection{Constant coefficient problems}\label{sec_cc}
Now consider the case $a(x,y)\equiv a$ and $b(x,y)\equiv b$, where $a,b$ are positive \reviii{constants}. The semi-discretized equation is the same as \eqref{semi2D1}-\eqref{semi2D2} but the operators $\mathbf{D}$, $\mathbf{A}$ and $d_{W,E,S,N}$ are in a simpler form
\begin{align*}
 &\mathbf{D} = a D_x\otimes I_y + b I_x\otimes D_y, \
  \mathbf{A} = a A_x\otimes H_y + b H_x\otimes A_y, \\
  &d_{W} = a\bs{d_{1x}}\otimes I_y,\   d_{E} = a\bs{d_{nx}}\otimes I_y, \ d_S = b I_x\otimes \bs{d_{1y}}, \ d_N = b I_x\otimes \bs{d_{ny}},        
\end{align*}
where $D_x=H_x^{-1}(-A_x-\bs{e_{1x}}\bs{d}^T_{\bs{1x}}+\bs{e_{nx}}\bs{d}^T_{\bs{nx}})$ approximates $\p^2/\p x^2$ and the operators with subscript $y$ are defined analogously. We further define 
\[
\wt{\mathbf{A}}=\mathbf{H}^{-\frac{1}{2}} \mathbf{A} \mathbf{H}^{-\frac{1}{2}}=(\underbrace{aH_x^{-\frac{1}{2}} A_x H_x^{-\frac{1}{2}}}_{\wt{A_x}})\otimes I_y + I_x\otimes (\underbrace{bH_y^{-\frac{1}{2}} A_y H_y^{-\frac{1}{2}}}_{\wt{A_y}})
\]
and consider the eigendecomposition of $\wt{A_x}$ and $\wt{A_y}$,
\[\wt{A_x}Q_x=Q_x\Lambda_x,\quad \wt{A_y}Q_y=Q_y\Lambda_y.\]
Here, $\Lambda_{x}$ is a diagonal matrix with the eigenvalues of $\wt{A_{x}}$ as the diagonal entries. Since $\wt{A_{x}}$ is real and symmetric, the eigenvectors can be chosen to be orthogonal $Q_{x}^T = Q_{x}^{-1}$. The operators $Q_y$ and $\Lambda_y$ are defined analogously. The operator $\wt{\mathbf{A}}$ can be diagonalized as
\[
\mathbf{Q}^T\wt{\mathbf{A}}{\mathbf{Q}}=\mathbf{\Lambda},
\] 
where the orthogonal matrix $\mathbf{Q}=Q_x\otimes Q_y$ and the diagonal matrix $\mathbf{\Lambda}=\Lambda_x\otimes I_y+I_x\otimes\Lambda_y$.

Next, we define $\wt{\bs{u}}$ and $\wt{\bs{v}}$ such that they satisfy 
\begin{align}\label{tildeuv}
{\bs{u}}=\mathbf{H}^{-\frac{1}{2}}\mathbf{Q}\wt{\bs{u}} \text{ and } {\bs{v}}=\mathbf{H}^{-\frac{1}{2}}\mathbf{Q}\wt{\bs{v}},
\end{align}
 respectively. Substituting the new variables into \eqref{semi2D1}, we obtain
\begin{align*}
  \mathbf{A}\mathbf{H}^{-\frac{1}{2}}\mathbf{Q}(\wt{\bs{u}}_t-\wt{\bs{v}}) = &\ {d_W} H_y \left( e_W^T \mathbf{H}^{-\frac{1}{2}}\mathbf{Q}\wt{\bs{v}}- \bs{f_{tW}}\right)  
 - d_E H_y\left(e_E^T \mathbf{H}^{-\frac{1}{2}}\mathbf{Q}\wt{\bs{v}}- \bs{f_{tE}}\right)  \\
 &+ d_S H_x \left( e_S^T \mathbf{H}^{-\frac{1}{2}}\mathbf{Q}\wt{\bs{v}}- \bs{f_{tS}}\right)  
 - d_N H_x \left( e_N^T \mathbf{H}^{-\frac{1}{2}}\mathbf{Q}\wt{\bs{v}}- \bs{f_{tN}}\right).
  \end{align*}
  \reviii{We multiply the above equation from the left by $(\mathbf{H}^{-\frac{1}{2}}\mathbf{Q})^T$, and obtain}  
\begin{align}\
  \mathbf{\Lambda}(\wt{\bs{u}}_t-\wt{\bs{v}}) =&\ (\mathbf{H}^{-\frac{1}{2}}\mathbf{Q})^T \left[ {d_W} H_y \left( e_W^T \mathbf{H}^{-\frac{1}{2}}\mathbf{Q}\wt{\bs{v}}- \bs{f_{tW}}\right)  
 - d_E H_y\left(e_E^T \mathbf{H}^{-\frac{1}{2}}\mathbf{Q}\wt{\bs{v}}- \bs{f_{tE}}\right) \right. \notag\\
 &\left.+ d_S H_x \left( e_S^T \mathbf{H}^{-\frac{1}{2}}\mathbf{Q}\wt{\bs{v}}- \bs{f_{tS}}\right)  
 - d_N H_x \left( e_N^T \mathbf{H}^{-\frac{1}{2}}\mathbf{Q}\wt{\bs{v}}- \bs{f_{tN}}\right)\right].\label{semi2D1t}
  \end{align}
We note that the diagonal matrix $\mathbf{\Lambda}$ has one eigenvalue equal to zero. If we order the eigenvalues such that the first diagonal entry of $\mathbf{\Lambda}$ is zero, \reviii{then the first equation of \eqref{semi2D1t} is $(\wt{u}_1)_t=\wt{v}_1$.} 
  
In the same way, we can substitute the new variables $\wt{\bs{u}}$ and $\wt{\bs{v}}$  into \eqref{semi2D2} and obtain
\begin{align*}
\mathbf{H}^{-\frac{1}{2}}\mathbf{Q}\wt{\bs{v}}_t = &\ \mathbf{D}\mathbf{H}^{-\frac{1}{2}}\mathbf{Q}\wt{\bs{u}}+\bs{F} \\
&+\theta \mathbf{H}^{-1} [e_W H_y (e_W^T\mathbf{H}^{-\frac{1}{2}}\mathbf{Q}\wt{\bs{v}}-\bs{f_{tW}})+e_E H_y (e_E^T\mathbf{H}^{-\frac{1}{2}}\mathbf{Q}\wt{\bs{v}}-\bs{f_{tE}}) \notag\\
&+e_S H_x (e_S^T\mathbf{H}^{-\frac{1}{2}}\mathbf{Q}\wt{\bs{v}}-\bs{f_{tS}})+e_N H_x (e_N^T\mathbf{H}^{-\frac{1}{2}}\mathbf{Q}\wt{\bs{v}}-\bs{f_{tN}})].
\end{align*}
\reviii{We then multiply the above equation from the left by $\mathbf{Q}^T \mathbf{H}^{\frac{1}{2}}$, and have} 
\begin{align}
\wt{\bs{v}}_t = &\ \mathbf{Q}^T \mathbf{H}^{\frac{1}{2}} \mathbf{D}\mathbf{H}^{-\frac{1}{2}}\mathbf{Q}\wt{\bs{u}}+\mathbf{Q}^T \mathbf{H}^{\frac{1}{2}}\bs{F}+\theta \mathbf{Q}^T \mathbf{H}^{-\frac{1}{2}} [e_W H_y (e_W^T\mathbf{H}^{-\frac{1}{2}}\mathbf{Q}\wt{\bs{v}}-\bs{f_{tW}})\label{semi2D2t} \\ 
&+e_E H_y (e_E^T\mathbf{H}^{-\frac{1}{2}}\mathbf{Q}\wt{\bs{v}}-\bs{f_{tE}}) 
+e_S H_x (e_S^T\mathbf{H}^{-\frac{1}{2}}\mathbf{Q}\wt{\bs{v}}-\bs{f_{tS}})+e_N H_x (e_N^T\mathbf{H}^{-\frac{1}{2}}\mathbf{Q}\wt{\bs{v}}-\bs{f_{tN}})].\notag
\end{align}
The transformed difference operator $\mathbf{Q}^T \mathbf{H}^{\frac{1}{2}} \mathbf{D}\mathbf{H}^{-\frac{1}{2}}\mathbf{Q}$ can be simplified by using the relation 
\[
\mathbf{D} = \mathbf{H}^{-1}(-\mathbf{A} -e_W H_y d_W^T + e_E H_y d_E^T-e_S H_x d_S^T + e_N H_x d_N^T) 
\]
to obtain
\begin{align*}
\mathbf{Q}^T \mathbf{H}^{\frac{1}{2}} \mathbf{D}\mathbf{H}^{-\frac{1}{2}}\mathbf{Q} &=   \mathbf{Q}^T \mathbf{H}^{-\frac{1}{2}} (-\mathbf{A} -e_W H_y d_W^T + e_E H_y d_E^T-e_S H_x d_S^T + e_N H_x d_N^T) \mathbf{H}^{-\frac{1}{2}}\mathbf{Q} \\
&= -\mathbf{\Lambda}+ \mathbf{Q}^T \mathbf{H}^{-\frac{1}{2}}  ( -e_W H_y d_W^T + e_E H_y d_E^T-e_S H_x d_S^T + e_N H_x d_N^T) \mathbf{H}^{-\frac{1}{2}}\mathbf{Q}.
\end{align*}
The operator $\mathbf{A}$, which is the volume part of $\mathbf{D}$, is diagonalized to $\mathbf{\Lambda}$. For the boundary parts, we do not need to use the $n_xn_y$-by-$n_xn_y$ dense matrix $\mathbf{Q}$. As an example, for the term  $\mathbf{Q}^T \mathbf{H}^{-\frac{1}{2}} e_W H_y d_W^T \mathbf{H}^{-\frac{1}{2}}\mathbf{Q}$, we have
\begin{align*}
&\mathbf{Q}^T \mathbf{H}^{-\frac{1}{2}} e_W H_y d_W^T \mathbf{H}^{-\frac{1}{2}}\mathbf{Q}\\
=& (Q_x^T \otimes Q_y^T) (H_x^{-\frac{1}{2}}\otimes H_y^{-\frac{1}{2}}) (e_{1x}\otimes I_y)(1\otimes H_y)(ad_{1x}^T\otimes I_y)(H_x^{-\frac{1}{2}}\otimes H_y^{-\frac{1}{2}})(Q_x \otimes Q_y)\\
=&(Q_x^T H_x^{-\frac{1}{2}} e_{1x} ad_{1x}^T H_x^{-\frac{1}{2}}Q_x)\otimes(Q_y^T H_y^{-\frac{1}{2}} H_y H_y^{-\frac{1}{2}} Q_y)\\
=&(Q_x^T H_x^{-\frac{1}{2}} e_{1x} ad_{1x}^T H_x^{-\frac{1}{2}}Q_x)\otimes I_y
\end{align*}
Furthermore, the rank of the $n_x$-by-$n_x$ matrix $e_{1x} ad_{1x}^T$ is 1. Hence, multiplying the $n_xn_y$-by-$n_xn_y$ matrix with the vector $\wt{\mathbf{u}}$ can be done with computational complexity $\mathcal{O}(n_xn_y)$. This procedure can be used for the computation of the other boundary terms in \eqref{semi2D1t} and \eqref{semi2D2t}. 
In the end, the solution to the original problem can be obtained by \eqref{tildeuv}. 

\subsection{Variable coefficient problems}\label{sec_vc}
The above diagonalization procedure cannot be easily generalized to solve \reviii{for}  problems with variable coefficients (either originating from heterogeneous material properties or grid transformation). Instead we may simply solve ${\bf A u}_t$ in each timestep by an iterative method. As  ${\bf A} $ is symmetric and since the right hand side will always be in the range of $A$ the method of choice is the preconditioned conjugate gradient method. Below, in Section \ref{sec:2D_exp} we show that when an incomplete Cholesky preconditioner is used  together with the initial guess      $\bs{u}_t\approx \bs{v}$, the number of iterations needed to meet a tolerance that scales with the order of the method is small.

\section{Numerical experiments}

We present numerical examples in Section \ref{sec:1D_exp} to verify \reviii{the} convergence property of our proposed method. In all experiments, the classical Runge-Kutta method is used for time integration. The $L_2$ errors at final time are computed as 
\[
\|\bs{u_h}-\bs{u_{ex}}\| = \sqrt{h^d (\bs{u_h}-\bs{u_{ex}})^T (\bs{u_h}-\bs{u_{ex}})},
\]
where $\bs{u_h}$ is the numerical solution, $\bs{u_{ex}}$ is the manufactured solution restricted to the grid, $h$ is the grid spacing and $d$ is the spatial dimension. The convergence rates for grids refined by a factor of two are estimated by 
\[
\log_2\f{\|\bs{u_{2h}}-\bs{u_{ex}}\|}{\|\bs{u_h}-\bs{u_{ex}}\|}.
\]
In Section \ref{sec:2D_exp}, we test the preconditioned conjugate gradient method for solving 2D wave equation with \reviii{variable coefficients}.

\subsection{Examples in one space dimension}\label{sec:1D_exp}

We start with a verification of \reviii{the} convergence rate for the wave equation \reviii{$U_{tt}=U_{xx}$} in the domain $x\in [-\pi/2,\pi/2]$ and $t\in [0,2]$. We consider the Dirichlet boundary conditions at $x=-\pi/2$ and $x=\pi/2$. The boundary data is obtained from the manufactured solution $U=\cos(10x+1)\cos(10t+2)$. 

\reviii{We construct the semi-discretization based on  \eqref{wave1dsingle1}-\eqref{wave1dsingle2} by using the SBP operators in \cite{Mattsson2004} of fourth and sixth order of accuracy, compute explicitly the pseudoinverse of $A$.} We are interested in how the dissipative term affects the accuracy of the numerical solution. To this end, we consider the parameter $\beta=0$ or $-1$ to control the dissipation. The $L_2$ errors and the corresponding rates of convergence are presented in Table \ref{tab_4th} for the fourth order method and Table \ref{tab_6th} for the sixth order method.  

\begin{table}[htb]
\begin{center}
\begin{minipage}{.49\linewidth}
  \begin{tabular}{lll}
    \hline
    & $\beta=0$ & \\ \hline
    $n$ & $L_2$ error & rate \\ \hline
    101 & $1.1469\times 10^{-2}$ &  \\ \hline
    201 & $1.5189\times 10^{-3}$ & 2.9166 \\ \hline
    401 & $1.9285\times 10^{-4}$ & 2.9775 \\ \hline
    801 & $2.4215\times 10^{-5}$ & 2.9934 \\ \hline
    1601 & $3.0314\times 10^{-6}$ & 2.9978 \\
    \hline
  \end{tabular}
 \end{minipage}
 \begin{minipage}{.49\linewidth}
  \begin{tabular}{lll}
    \hline
    & $\beta=-1$ & \\ \hline
    $n$ & $L_2$ error & rate \\ \hline
    101 & $5.8872\times 10^{-4}$ &  \\ \hline
    201 & $3.5251\times 10^{-5}$ & 4.0618\\ \hline
    401 & $2.1593\times 10^{-6}$ & 4.0290 \\ \hline
    801 & $1.3419\times 10^{-7}$ & 4.0082 \\ \hline
    1601 & $8.3723\times 10^{-9}$ & 4.0025 \\
    \hline
  \end{tabular}
 \end{minipage}
 \caption{The fourth order SBP-SAT method for the one dimensional wave equation in a single domain.\label{tab_4th}} 
 
 \end{center}
 \end{table}

\begin{table}[htb]
\begin{minipage}{.49\linewidth}
  \begin{tabular}{lll}
    \hline
    & $\beta=0$ & \\ \hline
    $n$ & $L_2$ error & rate \\ \hline
    101 & $3.4741\times 10^{-3}$ &  \\ \hline
    201 & $1.1656\times 10^{-4}$ & 4.8975 \\ \hline
    401 & $3.7103\times 10^{-6}$ & 4.9733 \\ \hline
    801 & $1.1652\times 10^{-7}$ & 4.9929 \\ \hline
    1601 & $3.6466\times 10^{-9}$ & 4.9979 \\
    \hline
  \end{tabular}
 \end{minipage}
 \begin{minipage}{.49\linewidth}
  \begin{tabular}{lll}
    \hline
    & $\beta=-1$ & \\ \hline
    $n$ & $L_2$ error & rate \\ \hline
    101 & $7.4933\times 10^{-5}$ &  \\ \hline
    201 & $1.4300\times 10^{-6}$ & 5.7155\\ \hline
    401 & $2.9257\times 10^{-8}$ & 5.6111 \\ \hline
    801 & $6.1548\times 10^{-10}$ & 5.5709 \\ \hline
    1601 & $1.3250\times 10^{-11}$ & 5.5377 \\
    \hline
  \end{tabular}
 \end{minipage}
 \caption{The sixth order SBP-SAT method for the one dimensional wave equation in a single domain. \label{tab_6th}} 

 \end{table}

We observe that the parameter $\beta$ affects the numerical errors and convergence rates. For the fourth order method, fourth order convergence rate is obtained when $\beta=-1$. However, when $\beta=0$ the convergence rate drops by one order to three. This agrees with the error estimate in Section \ref{sec_nm}. For the sixth order method, the choice $\beta=-1$ leads to a super-convergence of order 5.5. \revii{The same convergence rate is observed and proved in  \cite{Wang2017} for the traditional sixth order SBP-SAT discretization for the Dirichlet problem. With a careful analysis of the solution to the boundary system, it was shown that the coefficient multiplied with the slowly decaying component of the error in Laplace space equals to zero. This leads to an additional gain of a half order in convergence rate.  Without dissipation from the Dirichlet boundary, however, the convergence rate is five.}

\reviii{Next, we test the numerical interface treatment \eqref{semi_u}-\eqref{semi_v_tilde}, and consider the same problem as above but with a grid interface at $x=0$ and interface conditions \eqref{int1}-\eqref{int2}.} To eliminate any influence from the boundaries, we impose periodic boundary condition at $x=\pm\pi/2$. For the interface conditions, we choose either $\gamma=-1$ or $\gamma=0$, corresponding to with or without dissipation at the interface, respectively. The $L_2$ errors and the corresponding rates of convergence are presented in Table \ref{tab_4th_int} for the fourth order method and Table \ref{tab_6th_int} for the sixth order method.  

\begin{table}
\begin{minipage}{.49\linewidth}
  \begin{tabular}{lll}
    \hline
    & $\gamma=0$ & \\ \hline
    $n$ & $L_2$ error & rate \\ \hline
    51 & $1.6233\times 10^{-4}$ &  \\ \hline
    101 & $6.9416\times 10^{-6}$ & 4.5475 \\ \hline
    201 & $3.3128\times 10^{-7}$ & 4.3892 \\ \hline
    401 & $1.8150\times 10^{-8}$ & 4.1900 \\ \hline
    801 & $1.0787\times 10^{-9}$ & 4.0726 \\
    \hline
  \end{tabular}
 \end{minipage}
 \begin{minipage}{.49\linewidth}
  \begin{tabular}{lll}
    \hline
    & $\gamma=-1$ & \\ \hline
    $n$ & $L_2$ error & rate \\ \hline
    51 & $1.2908\times 10^{-4}$ &  \\ \hline
    101 & $6.6070\times 10^{-5}$ & 4.2881\\ \hline
    201 & $3.2790\times 10^{-6}$ & 4.3327 \\ \hline
    401 & $1.8134\times 10^{-7}$ & 4.1764 \\ \hline
    801 & $1.0788\times 10^{-9}$ & 4.0715 \\
    \hline
  \end{tabular}
  \end{minipage}
 \caption{The fourth order SBP-SAT method for the one dimensional wave equation with a grid interface.} 
 \label{tab_4th_int}
 \end{table}

\begin{table}
\begin{minipage}{.49\linewidth}
  \begin{tabular}{lll}
    \hline
    & $\gamma=0$ & \\ \hline
    $n$ & $L_2$ error & rate \\ \hline
    51 & $9.4638\times 10^{-5}$ &  \\ \hline
    101 & $1.4000\times 10^{-6}$ & 6.0790 \\ \hline
    201 & $3.1396\times 10^{-8}$ & 5.4786 \\ \hline
    401 & $8.1443\times 10^{-10}$ & 5.2686 \\ \hline
    801 & $2.2536\times 10^{-11}$ & 5.1755 \\
    \hline
  \end{tabular}
 \end{minipage}
 \begin{minipage}{.49\linewidth}
  \begin{tabular}{lll}
    \hline
    & $\gamma=-1$ & \\ \hline
    $n$ & $L_2$ error & rate \\ \hline
    51 & $5.0107\times 10^{-5}$ &  \\ \hline
    101 & $1.2083\times 10^{-6}$ & 5.3739\\ \hline
    201 & $2.6278\times 10^{-8}$ & 5.5230 \\ \hline
    401 & $5.7619\times 10^{-10}$ & 5.5112 \\ \hline
    801 & $1.2723\times 10^{-11}$ & 5.5010 \\
    \hline
  \end{tabular}
  \end{minipage}
 \caption{The sixth order SBP-SAT method for the one dimensional wave equation with a grid interface.} 
 \label{tab_6th_int}
 \end{table}

For the fourth order method, both $\gamma=0$ and $\gamma=-1$ lead to a convergence rate of order four. For the same mesh resolution, the L$_2$ errors are almost the same. For the sixth order method, the two choices of $\gamma$ give different rates of convergence. The convergence rate with $\gamma=-1$ is 5.5, but the rate drops to between 5 and 5.5 when $\gamma=0$. On the finest mesh, the L$_2$ error with the dissipative discretization is about half of the L$_2$ error with the energy-conserving discretization. \revii{We note that the with the traditional sixth order SBP-SAT discretization for the interface problem, the convergence rate is 5.5 \cite{Wang2017}, which is the same as the dissipative energy-based SBP-SAT discretization. }

\subsection{Examples in two space dimensions} \label{sec:2D_exp}

We consider variable coeffcient problem \eqref{wave2dsys} with $a(x,y)=0.5(\tanh(k(R-0.25))+3)$ and $b(x,y)=0.5(\tanh(k(R-0.25))+3)$, where $R=(x-0.5)^2+(y-0.5)^2$. The parameters $a$ and $b$ model heterogeneous material properties of a layered structure, with $k$ controlling the transition of two layers. A few examples of $a(x,y)$ with different values of $k$ are shown in Figure \ref{Plota}. We see that with a larger $k$, the transition zone of \reviii{the} two materials becomes smaller. We choose the forcing function $F$ so that the manufactured solution $U=\cos(2x+\pi/2)\cos(2y+\pi/2)\cos(2\sqrt{2}t+3)$ satisfies the equations.

\begin{figure}
     \begin{subfigure}[b]{0.45\textwidth}
\includegraphics[width=\textwidth]{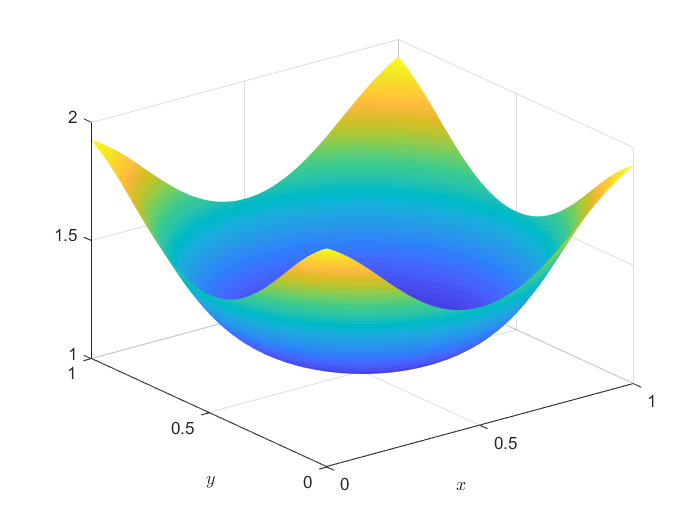}
\caption{$k=5$}
\end{subfigure}
     \begin{subfigure}[b]{0.45\textwidth}
\includegraphics[width=\textwidth]{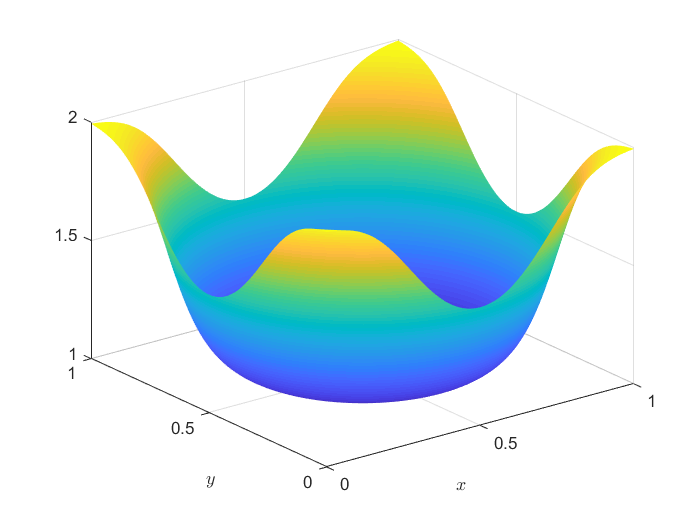}
\caption{$k=10$}
\end{subfigure}
     \\
     \begin{subfigure}[b]{0.45\textwidth}
\includegraphics[width=\textwidth]{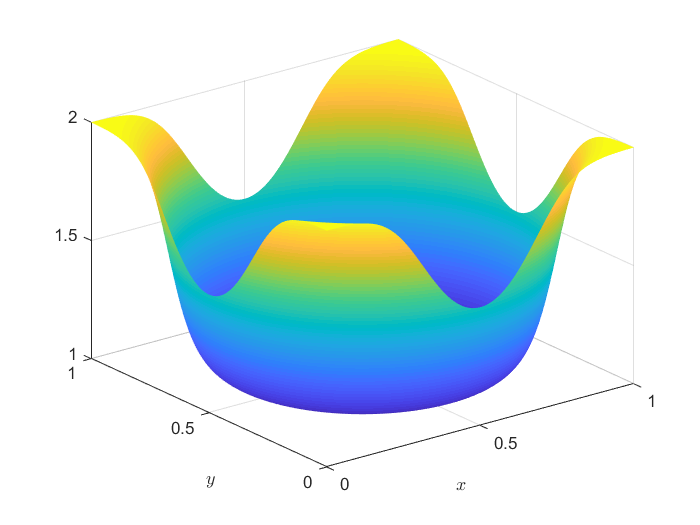}
\caption{$k=15$}
\end{subfigure}
     \begin{subfigure}[b]{0.45\textwidth}
     \includegraphics[width=\textwidth]{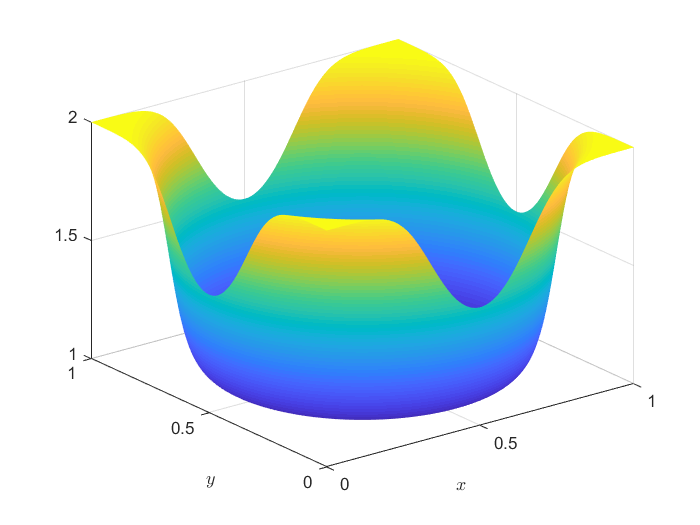}
\caption{$k=20$}
\end{subfigure}
\caption{Material property}
\label{Plota}
\end{figure}

The equations are discretized in space by the scheme  \eqref{semi2D1}-\eqref{semi2D2} with the second derivative variable coefficients SBP operators constructed in \cite{Mattsson2012}. The classical Runge-Kutta method is used to advance the semi-discretization in time. At each Runge-Kutta stage, the linear system is solved by a preconditioned conjugate gradient (PCG) method, where the preconditioner is obtained by the incomplete Cholesky (ICHOL) factorization. Since the matrix $\mathbf{A}$ is only semi-definite, in the ICHOL process we increase the diagonal elements by $1\%$ and $0.01\%$ for the fourth and sixth order methods, respectively. In addition, to keep the factorized matrix sparse, we use a drop-tolerance $10^{-4}$ and $10^{-6}$  for the fourth and sixth order methods, respectively.  In Table \ref{table_nr_iter}, the number of iterations (averaged \reviii{over} the four Runge-Kutta stages in all time steps) are shown for different material properties and mesh resolutions. We observe that in all cases we have tested, PCG converges with less than four iterations.  

\begin{table}
\begin{minipage}{.49\linewidth}
  \begin{tabular}{lllll}
\hline 
 &&Fourth &order&\\
    \hline
    $n$ & $k=5$ & $k=10$ & $k=15$ & $k=20$  \\ \hline
$16^2$ & 2.6& 1.9& 1.3& 1.1\\ \hline
$31^2$ & 3.1 & 2.3& 1.6& 1.3\\ \hline
$61^2$ & 3.3 &2.6 &1.8 & 1.4\\ \hline
$121^2$ &3.5  & 2.7 &2.0& 1.5 \\ \hline
  \end{tabular}
 \end{minipage}
 \begin{minipage}{.49\linewidth}
   \begin{tabular}{lllll}
\hline 
 &&Sixth &order&\\
    \hline
    $n$ & $k=5$ & $k=10$ & $k=15$ & $k=20$  \\ \hline
$16^2$ & 1.0& 1.0& 1.0& 1.0 \\ \hline
$31^2$ & 1.1& 1.2& 1.3& 1.4\\ \hline
$61^2$ & 1.4& 1.5& 1.8& 1.9\\ \hline
$121^2$ &2.2& 2.2& 2.5& 2.8 \\ \hline
  \end{tabular}
  \end{minipage}
 \caption{Number of iterations for solving the linear system: the fourth (left) and sixth (right) order SBP-SAT method for the two dimensional wave equation.} 
\label{table_nr_iter}
 \end{table}

\section{Conclusion}
We have developed an energy-based SBP-SAT discretization of the wave equation.  Comparing with the traditional SBP-SAT discretization, an advantage of the proposed method is that no mesh-dependent parameter is needed to imposed Dirichlet boundary conditions and material interface conditions. Our stability analysis shows that the discretization can either be energy conserving or dissipative. In addition, we have presented a general framework for deriving error estimates by the normal mode analysis and detailed the accuracy analysis for a fourth order discretization. 

In numerical experiments, we have examined more cases for the effect of dissipation on the convergence rate. \reviii{For the fourth order method with Dirichlet boundary conditions, the energy conserving discretization converges to third order, and the dissipative version converges to fourth order.} This is also theoretically proved; while at a grid interface, both dissipative and energy-conserving interface coupling lead to a fourth order convergence rate. For the sixth order method, dissipation at a Dirichlet boundary increases convergence rate from 5 to 5.5. At a grid interface, similar improvement is also observed. 
 
For the energy-based discretization to be efficient the vector ${\bf u}_t$ must be isolated and we have demonstrated that this is possible. For problems in one space dimension, this can be done by explicitly or implicitly forming the pseudoinverse and extracting a few of its columns.  For problems in multiple dimensions with constant coefficients, we have leveraged the diagonalization technique from \cite{Zhang2020a}. This technique gives an algorithm with the same cost as a traditional method of lines discretization  after a pre-computation step that only involve solving one dimensional eigenvalue problems. The same procedure cannot be generalized to problems with variable coefficients. However, our numerical experiments have demonstrated that the corresponding linear system can be solved efficiently by the conjugate gradient method with an incomplete Cholesky preconditioner. The iterative solver converges fast and is \reviii{not very sensitive to the material property and mesh resolution. Here, we have only considered two dimensional problems and observe the time-to-solution for our method and the traditional SBP discretization of the wave equation are roughly comparable. In three dimensions, the preconditioning and iterative solution for  obtaining ${\bf u}_t$ may be less efficient and it remains to be explored if the method presented here can be competitive with the traditional approach.}

\bibliography{Siyang_References}

\begin{thebibliography}{10}

\bibitem{Almquist2020}
M.~Almquist and E.~M. Dunham.
\newblock Non-stiff boundary and interface penalties for narrow-stencil finite
  difference approximations of the laplacian on curvilinear multiblock grids.
\newblock {\em J. Comput. Phys.}, 408:109294, 2020.

\bibitem{Almquist2019}
M.~Almquist, S.~Wang, and J.~Werpers.
\newblock Order-preserving interpolation for summation-by-parts operators at
  nonconforming grid interfaces.
\newblock {\em SIAM J. Sci. Comput.}, 41:A1201--A1227, 2019.

\bibitem{Appelo2015}
D.~Appel\"{o} and T.~Hagstrom.
\newblock {A} new discontinuous {G}alerkin formulation for wave equations in
  second--order form.
\newblock {\em SIAM J. Numer. Anal.}, 53:2705--2726, 2015.

\bibitem{Appelo2018}
D.~Appel\"{o} and T.~Hagstrom.
\newblock An energy--based discontinuous {G}alerkin discretization of the
  elastic wave equation in second order form.
\newblock {\em Comput. Methods Appl. Mech. Engrg.}, 338:362--391, 2018.

\bibitem{Appelo2007}
D.~Appel\"{o} and G.~Kreiss.
\newblock Application of a perfectly matched layer to the nonlinear wave
  equation.
\newblock {\em Wave Motion}, 44:531--548, 2007.

\bibitem{Appelo2019}
D.~Appel\"{o} and S.~Wang.
\newblock An energy based discontinuous {G}alerkin method for coupled
  elasto-acoustic wave equations in second order form.
\newblock {\em Int. J. Numer. Meth. Eng.}, 119:618--638, 2019.

\bibitem{Carpenter1994}
M.~H. Carpenter, D.~Gottlieb, and S.~Abarbanel.
\newblock Time--stable boundary conditions for finite--difference schemes
  solving hyperbolic systems: methodology and application to high--order
  compact schemes.
\newblock {\em J. Comput. Phys.}, 111:220--236, 1994.

\bibitem{Duru2014V}
K.~Duru and K.~Virta.
\newblock Stable and high order accurate difference methods for the elastic
  wave equation in discontinuous media.
\newblock {\em J. Comput. Phys.}, 279:37--62, 2014.

\bibitem{Eriksson2020}
S.~Eriksson.
\newblock Inverses of {SBP-SAT} finite difference operators approximating the
  first and second derivative.
\newblock {\em J. Sci. Comput.}, 89:30, 2021.

\bibitem{Eriksson2021}
S.~Eriksson and S.~Wang.
\newblock Summation-by-parts approximations of the second derivative:
  {P}seudoinverse and revisitation of a high order accurate operator.
\newblock {\em SIAM J. Numer. Anal.}, pages 2669--2697, 2021.

\bibitem{Grote2006}
M.~J. Grote, A.~Schneebeli, and D.~Sch\"{o}tzau.
\newblock Discontinuous {G}alerkin finite element method for the wave equation.
\newblock {\em SIAM. J. Numer. Anal.}, 44:2408--2431, 2006.

\bibitem{Gustafsson2013}
B.~Gustafsson, H.~O. Kreiss, and J.~Oliger.
\newblock {\em Time--Dependent Problems and Difference Methods}.
\newblock John Wiley \& Sons, 2013.

\bibitem{Henshaw2006}
W.~D. Henshaw.
\newblock A high--order accurate parallel solver for {M}axwell's equations on
  overlapping grids.
\newblock {\em SIAM J. Sci. Comput.}, 28:1730--1765, 2006.

\bibitem{Hicken2013}
J.~E. Hicken and D.~W. Zingg.
\newblock Summation-by-parts operators and high-order quadrature.
\newblock {\em J. Comput. Appl. Math.}, 237:111--125, 2013.

\bibitem{Kreiss1972}
H.~O. Kreiss and J.~Oliger.
\newblock {C}omparison of accurate methods for the integration of hyperbolic
  equations.
\newblock {\em Tellus}, 24:199--215, 1972.

\bibitem{lynch1964direct}
R.~E. Lynch, J.~R. Rice, and D.~H. Thomas.
\newblock Direct solution of partial difference equations by tensor product
  methods.
\newblock {\em Numer. Math.}, 6(1):185--199, 1964.

\bibitem{Mattsson2012}
K.~Mattsson.
\newblock Summation by parts operators for finite difference approximations of
  second--derivatives with variable coefficient.
\newblock {\em J. Sci. Comput.}, 51:650--682, 2012.

\bibitem{Mattsson2008}
K.~Mattsson, F.~Ham, and G.~Iaccarino.
\newblock Stable and accurate wave--propagation in discontinuous media.
\newblock {\em J. Comput. Phys.}, 227:8753--8767, 2008.

\bibitem{Mattsson2009}
K.~Mattsson, F.~Ham, and G.~Iaccarino.
\newblock Stable boundary treatment for the wave equation on second--order
  form.
\newblock {\em J. Sci. Comput.}, 41:366--383, 2009.

\bibitem{Mattsson2004}
K.~Mattsson and J.~Nordstr\"{o}m.
\newblock Summation by parts operators for finite difference approximations of
  second derivatives.
\newblock {\em J. Comput. Phys.}, 199:503--540, 2004.

\bibitem{Nissen2012}
A.~Nissen, G.~Kreiss, and M.~Gerritsen.
\newblock Stability at nonconforming grid interfaces for a high order
  discretization of the {S}chr\"{o}dinger equation.
\newblock {\em J. Sci. Comput.}, 53:528--551, 2012.

\bibitem{Petersson2010}
N.~A. Petersson and B.~Sj\"{o}green.
\newblock Stable grid refinement and singular source discretization for seismic
  wave simulations.
\newblock {\em Commun. Comput. Phys.}, 8:1074--1110, 2010.

\bibitem{Sjogreen2012}
B.~Sj\"{o}green and N.~A. Petersson.
\newblock A fourth order accurate finite difference scheme for the elastic wave
  equation in second order formulation.
\newblock {\em J. Sci. Comput.}, 52:17--48, 2012.

\bibitem{Svard2019}
M.~Sv\"{a}rd and J.~Nordstr\"{o}m.
\newblock On the convergence rates of energy-stable finite-difference schemes.
\newblock {\em J. Comput. Phys.}, 397:108819, 2019.

\bibitem{Virta2014}
K.~Virta and K.~Mattsson.
\newblock Acoustic wave propagation in complicated geometries and heterogeneous
  media.
\newblock {\em J. Sci. Comput.}, 61:90--118, 2014.

\bibitem{Wang2018}
S.~Wang.
\newblock An improved high order finite difference method for non--conforming
  grid interfaces for the wave equation.
\newblock {\em J. Sci. Comput.}, 77:775--792, 2018.

\bibitem{Wang2017}
S.~Wang and G.~Kreiss.
\newblock Convergence of summation--by--parts finite difference methods for the
  wave equation.
\newblock {\em J. Sci. Comput.}, 71:219--245, 2017.

\bibitem{Wang2018b}
S.~Wang, A.~Nissen, and G.~Kreiss.
\newblock Convergence of finite difference methods for the wave equation in two
  space dimensions.
\newblock {\em Math. Comp.}, 87:2737--2763, 2018.

\bibitem{Wang2016}
S.~Wang, K.~Virta, and G.~Kreiss.
\newblock {H}igh order finite difference methods for the wave equation with
  non--conforming grid interfaces.
\newblock {\em J. Sci. Comput.}, 68:1002--1028, 2016.

\bibitem{Zhang2020a}
L.~Zhang, D.~Appel\"{o}, and T.~Hagstrom.
\newblock Energy-based discontinuous {G}alerkin difference methods for
  second-order wave equations.
\newblock {\em Commun. Appl. Math. Comput.}, 2021.

\end{thebibliography}
\bibliographystyle{plain}

\end{document}